\definecolor{cornellrot}{RGB}{179,27,27}
\newcommand{\Z}{\mathbb{Z}}
\renewcommand{\geq}{\geqslant}
\renewcommand{\leq}{\leqslant}
\newcommand{\larw}[1]{\raisebox{-2pt}{$\xleftarrow{#1}$}}
\newcommand{\rarw}[1]{\raisebox{-2pt}{$\xrightarrow{#1}$}}
\newcommand{\brkt}[1]{\textup{(}#1\textup{)}}
\newcommand{\sset}   [1] {\{ {#1} \}}
\newcommand{\pres}   [2] {\langle\, {#1} \mid {#2} \,\rangle}
\newcommand{\gpres}  [1] {\langle\, {#1} \,\rangle}
\newcommand{\ngpres} [1] {\langle\!\langle\, {#1} \,\rangle\!\rangle}
\newcommand{\hugepres}[2]{
  \stretchrel{\Bigg\langle}{\left. {#1}\hspace{7pt}\middle|\hspace{2pt} {#2} \right.}
  \stretchrel*{\Bigg\rangle}{\left. {#1}\hspace{7pt}\middle|\hspace{2pt} {#2} \right.}
}
\newcommand{\ie}{i.e.~}
\newcommand{\egcomma}{e.g.,~}
\newcommand{\blanks}{{\mkern 1.5mu\cdot\mkern 1.5mu,\cdot\mkern 1.5mu}}
\newcommand \mydots {\leavevmode \cleaders \hb@xt@ .33em{\hss .\hss }\hfill \kern \z@}
\newtheorem{theorem}              {Theorem}
\newtheorem{corollary}  [theorem] {Corollary}
\newtheorem{lemma}      [theorem] {Lemma}
\newtheorem{prop}       [theorem] {Proposition}
\newtheorem{question}   [theorem] {Question}
\newtheorem{maintheorem}{Theorem}
\newtheorem{maincorollary}[maintheorem]{Corollary}
\theoremstyle{definition}
\newtheorem*{convention*}{Notational convention}
\newtheorem*{acknowledgement*}{Acknowledgement}
\newtheorem{remark}[theorem]{Remark}
\newtheorem{excursion}{Excursion}
\numberwithin{theorem}{section}
\begin{document}

\title{A class of digraph groups\\defined by balanced presentations}

\author{Johannes Cuno and Gerald Williams\,\footnote{Both authors acknowledge the support of the London Mathematical Society, Research in Pairs, No 41235. The research of the first author is further supported by the Austrian Science Fund (FWF): W1230-N13 and P24028-N18, the Canada Research Chairs Program, and the European Research Council (ERC): No 725773 ``GroIsRan''.}}
\date{\today}
\maketitle

\begin{abstract}
\noindent We consider groups defined by non-empty balanced presentations with the property that each relator is of the form $R(x,y)$, where $x$ and $y$ are distinct generators and $R(\blanks)$ is determined by some fixed cyclically reduced word $R(a,b)$ that involves both $a$ and $b$. To every such presentation we associate a directed graph whose vertices correspond to the generators and whose arcs correspond to the relators. Under the hypothesis that the girth of the underlying undirected graph is at least 4, we show that the resulting groups are non-trivial and cannot be finite of rank~3 or higher. Without the hypothesis on the girth it is well known that both the trivial group and finite groups of rank~3 can arise.\medskip\\
\noindent \textbf{Keywords:} graph group, balanced presentation, deficiency zero, rank\\
\noindent \textbf{MSC2010 classes:} 20F05 (primary), 20E99 (secondary)
\end{abstract}

\section{Introduction}\label{sec:Intro}

The groups considered in the present paper fit into the following general framework: they are defined by finite presentations with the property that each relator is of the form $R(x,y)$, where $x$ and $y$ are distinct generators and $R(\blanks)$ is determined by some fixed cyclically reduced word $R(a,b)$ (in the free group generated by $a$ and $b$) that involves both $a$ and $b$. Prominent examples of these groups are \emph{right-angled Artin groups} (also known as \emph{graph groups} or \emph{partially commutative groups}), which arise when $R(a,b)=a^{-1}b^{-1}ab$.

Each of the above groups can be expressed in terms of a finite \emph{digraph}~$\Lambda$ with vertex set $V(\Lambda)$ and (directed) arc set $A(\Lambda)$. We use the convention that the arc set is an irreflexive relation on the vertex set so neither admit multiple arcs nor loops. The vertices $v\in V(\Lambda)$ correspond to the generators~$x_v$ and the arcs~$(u,v)\in A(\Lambda)$ correspond to the relators $R(x_u,x_v)$ so that the group~$G_{\Lambda}(R)$ is defined by the presentation
\[ P_{\Lambda}(R)=\pres{x_v\ (v\in V(\Lambda))}{R(x_u,x_v)\ ((u,v)\in A(\Lambda))}. \]
Our class of groups can therefore be thought of as a class of generalized graph groups or \emph{digraph groups}. In the case where $\Lambda$ is a directed $n$-cycle, \ie $V(\Lambda)=\{1,2,\ldots,n\}$ and $A(\Lambda) = \{ (1,2),(2,3),\ldots, (n,1)\}$, the corresponding presentation
\[ P_{\Lambda}(R)=\pres{x_1,\ldots,x_n}{R(x_1,x_2),R(x_2,x_3),\ldots,R(x_n,x_1)} \]
is an example of a \emph{cyclic presentation} (see, \egcomma\cite[Chapter~III, \S9]{JohnsonBook}). Cyclic presentations are special cases of \emph{balanced presentations}, which are presentations with an equal number of generators and relators. Since presentations with more generators than relators necessarily define infinite groups, which can be seen by abelianizing the groups, balanced presentations represent a borderline situation where the corresponding groups can be finite or infinite. In our setting $P_{\Lambda}(R)$ is balanced precisely when $\Lambda$ has an equal number of vertices and arcs. This is the case that we shall focus on in the present paper.

The \emph{rank} of a group $G$, denoted by $\mathrm{rank}(G)$, is the cardinality of a smallest generating set for~$G$. It is not known if there exists a balanced presentation defining a finite group of rank~4 or larger (see, \egcomma\cite[Problem~1]{HavasNewmanOBrien}) though it is known that rank~3 can be attained (see, \egcomma\cite[Chapter~III, \S8]{JohnsonBook}).

It is a consequence of the Golod--Shafarevich theorem~\cite{GolodShafarevich} that whenever a balanced presentation defines a finite nilpotent group, the latter has rank at most~3. Further, by \cite[Theorem~9~(ii)]{JohnsonWamsleyWright}, whenever a balanced presentation defines a finite group, be it nilpotent or not, the abelianization of that group must also have rank at most~3. Details can be found in~\cite[\S3--4]{JohnsonRobertson} and in the references therein. Let us finally record that a group has rank~0 if and only if it is the trivial group; balanced presentations of the trivial group are sought in connection with the Andrews--Curtis conjecture~\cite{AndrewsCurtis}.

Both balanced presentations of finite groups of rank~3 and non-empty balanced presentations of the trivial group can be found within our class of presentations $P_{\Lambda}(R)$. For example, if $\Lambda$ is the directed 3-cycle and $R(a,b)=a^{-1}bab^{-q}$, we obtain Mennicke's groups
\[ M(q,q,q)=\pres{x_1,x_2,x_3}{x_1^{-1}x_2x_1=x_2^q,x_2^{-1}x_3x_2=x_3^q,x_3^{-1}x_1x_3=x_1^q},\]
which are studied in~\cite{Mennicke} and appear as $G(1,q;1,q;1,q)$ in~\cite{Allcock}. For all $q\geq 3$ these groups are finite of rank~3~\cite{Mennicke} and for $q=2$ they are trivial~\cite{Higman}. If we modify this example and take the directed 2-cycle instead of the directed 3-cycle, the resulting groups will have similar properties: for all $q\geq 3$ they are finite of rank~2 and for $q=2$ they are trivial. The essential part of the proof can be found in~\cite{CampbellRobertson}. On the other hand, if we take a directed $n$-cycle ($n\geq 4$) and an arbitrary non-zero integer~$q$, the resulting groups will always be infinite. This follows, for example, from a general curvature argument due to Pride, see Corollary~\ref{cor:trianglefreeimpliesinfinite}. Note that for the directed 4-cycle and $q=2$ the resulting group is Higman's group~\cite{Higman}. A different example arises when~$\Lambda$ is the directed 3-cycle and $R(a,b)=b^{-1}ab (b^{q-2}a^{-1}b^{q+2})^{-1}$, where we obtain the groups
\[ J(q,q,q)=\hugepres{x_1,x_2,x_3}{\begin{array}{l}x_2^{-1}x_1x_2=x_2^{q-2}x_1^{-1}x_2^{q+2},\\ x_3^{-1}x_2x_3=x_3^{q-2}x_2^{-1}x_3^{q+2},\\ x_1^{-1}x_3x_1=x_1^{q-2}x_3^{-1}x_1^{q+2}\end{array}}\]
considered in~\cite{Johnson79} and \cite[page~70]{JohnsonBook}, which for all even $q\geq 2$ are finite of rank~3.

Pride showed in~\cite{Pride} that if $\Lambda$ is a directed $n$-cycle ($n\geq 4$) and $R(a,b)$ is a cyclically reduced word that involves both $a$ and $b$, then the resulting group $G_{\Lambda}(R)$ can never be finite of rank~3 or trivial. The precise statement is given in Theorem~\ref{thm:pridecycpres}, below. The following notational convention, partially introduced by Pride, will be used throughout this paper.

\begin{convention*}
Given a cyclically reduced word $R(a,b)$ that involves both $a$ and $b$, we use $\alpha$ and $-\beta$ to denote the exponent sums of $a$ and $b$ in $R(a,b)$, respectively, and $K$ to denote the group defined by the presentation $\pres{a,b}{R(a,b)}$. Up to cyclic permutation the word $R(a,b)$ is of the form $a^{\alpha_1}b^{\beta_1}\cdots a^{\alpha_t}b^{\beta_t}$ with $t\geq 1$ and $\alpha_i,\beta_i\in\Z\smallsetminus\{0\}$ \brkt{$1\leq i \leq t$}. We use $\delta_a$ and $\delta_b$ to denote the greatest common divisors $(\alpha_1,\ldots,\alpha_t)$ and $(\beta_1,\ldots,\beta_t)$, respectively.
\end{convention*}

\begin{theorem}[{\cite[Theorem~3]{Pride}}]\label{thm:pridecycpres}
Let $\Lambda$ be a directed $n$-cycle \brkt{$n\geq 4$} and let $R(a,b)$ be a cyclically reduced word that involves both $a$ and $b$. Then $G_{\Lambda}(R)$ is finite if and only if $\alpha\neq 0$, $\beta\neq 0$, $(\alpha,\beta)=1$, $\alpha^n-\beta^n\neq 0$, $a^\alpha=b^\beta$ in $K$, in which case $G_{\Lambda}(R)\cong\Z_{|\alpha^n-\beta^n|}$. In particular, if $G_{\Lambda}(R)$ is finite, then $\mathrm{rank}(G_{\Lambda}(R))=1$.
\end{theorem}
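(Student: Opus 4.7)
Write $G = G_\Lambda(R)$ and $N = |\alpha^n - \beta^n|$; the plan is to prove the two directions separately, using both the derived relations on the generators and the rank bound cited in the introduction.

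\textbf{Sufficiency.} Assume the five conditions hold. The hypothesis $a^\alpha = b^\beta$ in $K$ says that $a^\alpha b^{-\beta}$ lies in the normal closure of $R(a,b)$ in $F(a,b)$; substituting $a \mapsto x_i$, $b \mapsto x_{i+1}$ yields $x_i^\alpha = x_{i+1}^\beta$ in $G$ for every $i \in \Z/n\Z$. By induction on $k$ we get $x_1^{\alpha^k} = x_{k+1}^{\beta^k}$, and with $k = n$ this becomes $x_1^N = 1$; the same holds for every $x_i$. A Smith-normal-form reduction of the circulant relation matrix (whose symbol is $\alpha - \beta t$) shows $G^{\mathrm{ab}} \cong \Z/N\Z$, using that $(\alpha,\beta) = 1$ forces $(\beta, N) = 1$, so $\beta$ is invertible modulo $N$.

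To upgrade $G^{\mathrm{ab}} \cong \Z/N\Z$ to $G \cong \Z/N\Z$, set $c_i := x_i^\alpha = x_{i+1}^\beta$. Each $c_i$ is simultaneously a power of $x_i$ and of $x_{i+1}$, so it is central in the two-generator subgroup $\langle x_i, x_{i+1}\rangle$, and adjacent $c_i, c_{i+1}$ commute as both lie in the cyclic $\langle x_{i+1}\rangle$. Bezout's identity $p\alpha + q\beta = 1$ then gives, using that $c_{i-1} = x_i^\beta$ and $c_i = x_i^\alpha$ commute,
\[
x_i = c_i^p\, c_{i-1}^q,
\]
so every generator is a word in the $c_j$'s. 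I would next show by induction along the cycle (using the derived relations $c_j^\alpha = c_{j+1}^\beta$ together with the already-established local centralities) that all the $c_j$'s pairwise commute, making $G$ abelian, whence $G \cong G^{\mathrm{ab}} \cong \Z/N\Z$.

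\textbf{Necessity.} If $G$ is finite, finiteness of $G^{\mathrm{ab}}$ forces $\alpha^n - \beta^n \neq 0$. By the Johnson--Wamsley--Wright bound cited in the introduction, $\mathrm{rank}(G^{\mathrm{ab}}) \leq 3$; but if $(\alpha,\beta) = d > 1$ then $G^{\mathrm{ab}}$ has $(\Z/d\Z)^n$ as a quotient, which has rank $n \geq 4$, a contradiction. Hence $(\alpha,\beta) = 1$. The remaining conditions $\alpha \neq 0$, $\beta \neq 0$ and $a^\alpha = b^\beta$ in $K$ I would derive from Pride's curvature argument (referenced in the introduction as Corollary~\ref{cor:trianglefreeimpliesinfinite}): in each failing case that argument exhibits an infinite quotient of $G$, contradicting the standing assumption of finiteness.

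The main obstacle is the global commutation of the $c_j$'s in the sufficiency direction: local centrality is immediate, but propagating it around the cycle to show $G$ is abelian is delicate and is where the hypothesis $n \geq 4$ appears essential. A secondary challenge is extracting the condition $a^\alpha = b^\beta$ in $K$ on the necessity side, which likely requires a Freiheitssatz-style analysis of the two-generator subgroups $\langle x_i, x_{i+1}\rangle \leq G$, which inherit the one-relator structure of $K$.
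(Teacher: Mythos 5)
Your necessity direction is essentially sound and close in spirit to the paper's: the conditions $\alpha\neq 0$, $\beta\neq 0$, and $a^\alpha=b^\beta$ in $K$ do come from Property~$W_1$ via Proposition~\ref{prop:center} and Corollary~\ref{cor:trianglefreeimpliesinfinite}, exactly as you indicate, and your derivation of $(\alpha,\beta)=1$ from the Johnson--Wamsley--Wright rank bound is valid, if heavier than the paper's move of killing all generators except two non-adjacent ones so that $G$ maps onto $\Z_{(\alpha,\beta)}\ast\Z_{(\alpha,\beta)}$ (Lemma~\ref{lem:M2allowedbalancedgraphs}).

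The genuine gap is in sufficiency, at exactly the step you flag. Your plan is to prove that the $c_j$ pairwise commute ``by induction along the cycle'' using only the derived relations $c_j^\alpha=c_{j+1}^\beta$ and the local centralities. This cannot work as stated: the relations $x_i^\alpha=x_{i+1}^\beta$ by themselves present an infinite generalized Baumslag--Solitar group over the $n$-cycle, so no argument manipulating only those relations and adjacent commutations can conclude that $G$ is finite cyclic --- you must re-inject the relation $x_i^{N}=1$ that you derived earlier. Once you do, the step closes without any induction: $\langle x_i\rangle$ is cyclic of order dividing $N=|\alpha^n-\beta^n|$, and $(\alpha,\beta)=1$ forces $(\alpha,N)=(\beta,N)=1$, whence $\langle x_i\rangle=\langle x_i^\alpha\rangle=\langle x_{i+1}^\beta\rangle=\langle x_{i+1}\rangle$; so all the $\langle x_i\rangle$ coincide, $G$ is cyclic, and $G\cong G^{\mathrm{ab}}\cong\Z_{N}$. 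Note also that your closing remark misplaces the role of $n\geq 4$: sufficiency holds for every $n\geq 2$ (the paper's Lemma~\ref{lem:M1Lambda0000} is stated for $n\geq 2$); the girth hypothesis is needed only on the necessity side, where Corollary~\ref{cor:trianglefreeimpliesinfinite} and the existence of two non-adjacent vertices are invoked. For comparison, the paper proves the identification $G\cong\Z_{N}$ quite differently, by the tracking argument of Lemma~\ref{lem:trackingargument}: it adjoins $x_1^{\gamma}$ with $\gamma=\alpha^n-\beta^n$ and eliminates $x_1,\ldots,x_{n-1}$ one at a time, arriving at a one-generator presentation whose relators have greatest common divisor $|\gamma|$.
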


The purpose of the present paper is to generalize Theorem~\ref{thm:pridecycpres} from cyclic presentations to balanced presentations, \ie to the case where the digraph $\Lambda$ has an equal number of vertices and arcs. As shown by the examples, above, if the \emph{underlying undirected graph} of $\Lambda$, \ie the undirected graph obtained from $\Lambda$ by replacing each directed edge by an undirected edge, contains a cycle of length 2 or 3, the conclusion that $G_{\Lambda}(R)$ is neither finite of rank~3 nor trivial cannot always be obtained. We therefore impose the hypothesis that the girth of the underlying undirected graph of $\Lambda$ is at least 4. Corollary~\ref{maincor:rank<3} then provides the aspired generalization of Theorem~\ref{thm:pridecycpres}. This is a corollary to our main theorem, Theorem~\ref{mainthm:balanced}, which gives tight conditions that must be satisfied if $G_{\Lambda}(R)$ is a finite group.

Before stating it we introduce some terminology for digraphs. By a \emph{weakly connected} digraph we mean a digraph whose underlying undirected graph is connected. Vertices with positive outdegree and indegree zero are called \emph{sources}, vertices with positive indegree and outdegree zero are called \emph{sinks}, and vertices whose indegree and outdegree sum to one are called \emph{leaves}. In particular, every leaf must be either a source or a sink. In some cases we will \emph{recursively prune} all source leaves. This means that we remove the source leaves from the vertex set $V(\Lambda)$ and the arcs that are incident with these source leaves from the arc set $A(\Lambda)$. Afterwards, we consider the resulting digraph and repeat the previous step until we eventually arrive at a digraph $\Lambda_{s}$ without any source leaves. Because the initial digraph~$\Lambda$ is finite, this procedure is guaranteed to end. In the same way, recursively pruning all sink leaves yields a digraph~$\Lambda_{t}$ without any sink leaves. To state our results we will need to refer to certain classes of digraphs. These are defined in Figure~\ref{fig:digraphs}, below.
%
%
\vfill

\begin{figure}[!ht]
\begin{center}
\includegraphics{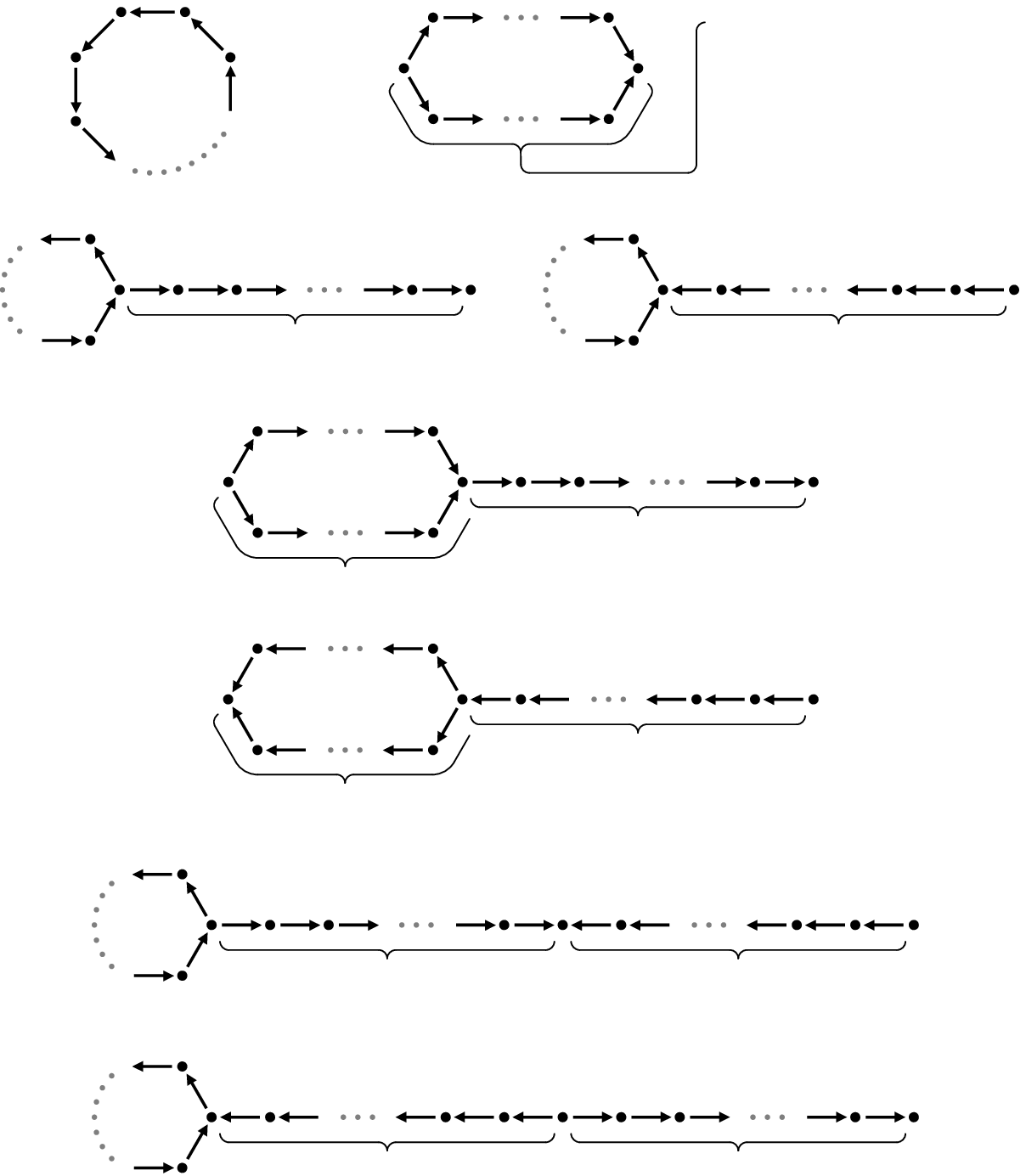}
\begin{picture}(0,0)

\put(-356.5,390){$\Lambda(n)$}
\put(-301.5,373.75){\small $n$}
\put(-307.5,362.25){\small arcs}

\put(-253.5,390){$\Lambda(n,d)$}
\put(-187,375){\small $n$ arcs}
\put(-105.5,373){\begin{minipage}{102pt} \baselineskip=12pt The shortest directed path from the source to the sink has $d$ arcs, so \end{minipage}}
\put(-105.5,345){\begin{minipage}{102pt} \[ \tfrac{n}{2}\geq d\geq 1. \] \end{minipage}}

\put(-235,312){$\Lambda(n;\rarw{m})$}
\put(-275.5,280){\small $m\geq 1$ arcs}
\put(-333.5,304.5){\small $n$}
\put(-339.5,293.5){\small arcs}

\put(-50,312){$\Lambda(n;\larw{m})$}
\put(-90.5,280){\small $m\geq 1$ arcs}
\put(-148.5,304.5){\small $n$}
\put(-154.5,293.5){\small arcs}

\put(-100,98.5){$\Lambda(n;\rarw{m},\larw{\ell})$}
\put(-302,88){\small $n$}
\put(-308,77){\small arcs}
\put(-244.25,64){\small $m\geq 1$ arcs}
\put(-120,64){\small $\ell\geq 1$ arcs}

\put(-100,33){$\Lambda(n;\larw{m},\rarw{\ell})$}
\put(-302,22.5){\small $n$}
\put(-308,11.5){\small arcs}
\put(-244.25,-1.5){\small $m\geq 1$ arcs}
\put(-120,-1.5){\small $\ell\geq 1$ arcs}

\put(-130,247.5){$\Lambda(n,d;\rarw{m})$}
\put(-246.5,233.5){\small $n$ arcs}
\put(-256,197.5){\small $\frac{n}{2}\geq d\geq 1$}
\put(-159,214.5){\small $m\geq 1$ arcs}

\put(-130,173){$\Lambda(n,d;\larw{m})$}
\put(-246.5,159){\small $n$ arcs}
\put(-256,123){\small $\frac{n}{2}\geq d\geq 1$}
\put(-159,140){\small $m\geq 1$ arcs}

\end{picture}
%
%
\medskip\\
\caption{Classes of digraphs referred to in the statement of Theorem~\ref{mainthm:balanced}.}
\label{fig:digraphs}
\end{center}
\end{figure}

\begin{maintheorem}\label{mainthm:balanced}
Let $\Lambda$ be a non-empty finite digraph with an equal number of vertices and arcs whose underlying undirected graph has girth $n$ \brkt{$n\geq 4$} and let $R(a,b)$ be a cyclically reduced word that involves both $a$ and $b$ with exponent sums $\alpha$ and $-\beta$ in $a$ and $b$, respectively. If $G_{\Lambda}(R)$ is finite, then $\alpha\neq 0$, $\beta\neq 0$, $(\alpha,\beta)=1$, $\alpha^n-\beta^n\neq 0$, $a^\alpha=b^\beta$ in $K=\pres{a,b}{R(a,b)}$, $G_{\Lambda}(R)$ is non-trivial, and one of the following holds:
\begin{center}
\begin{tabular}{r@{~~}r@{~~}p{7.6cm}@{~}p{6.55cm}}
\brkt{1} & \multicolumn{2}{@{}l}{$|\alpha|\geq 2$, $|\beta|\geq 2$, and} & in which case \medskip \\
& \brkt{a} & $\Lambda=\Lambda(n)$ \mydots & $G_{\Lambda}(R)\cong\Z_{|\alpha^n-\beta^n|}$, \smallskip \\
& \brkt{b} & $\Lambda=\Lambda(n;\rarw{m})$ \brkt{$m\geq 1$} \mydots & $G_{\Lambda}(R)\cong\Z_{|\beta^m(\alpha^n-\beta^n)|}$, \smallskip \\
& \brkt{c} & $\Lambda=\Lambda(n;\larw{m})$ \brkt{$m\geq 1$} \mydots & $G_{\Lambda}(R)\cong\Z_{|\alpha^m(\alpha^n-\beta^n)|}$, \medskip \\
& \brkt{d} & $\Lambda=\Lambda(n,1)$, $\delta_a=\delta_b=1$ \mydots & $G_{\Lambda}(R)\cong K/\ngpres{a^{\alpha(\alpha^{n-2}-\beta^{n-2})}}^K$, \smallskip \\
&&& $G_{\Lambda}(R)^\mathrm{ab}\cong \Z_{|\alpha\beta(\alpha^{n-2}-\beta^{n-2})|}$, \medskip \\
& \brkt{e} & $\Lambda=\Lambda(n;\rarw{m},\larw{1})$ \brkt{$m\geq 1$}, $\delta_a=\delta_b=1$ \mydots & $G_{\Lambda}(R)\cong K/\ngpres{ b^{\beta^{m}(\alpha^n-\beta^n)} }^K$, \smallskip \\
&&& $G_{\Lambda}(R)^\mathrm{ab}\cong \Z_{|\alpha\beta^m(\alpha^{n}-\beta^{n})|}$, \medskip \\
& \brkt{f} & $\Lambda=\Lambda(n;\larw{m},\rarw{1})$ \brkt{$m\geq 1$}, $\delta_a=\delta_b=1$ \mydots & $G_{\Lambda}(R)\cong K/\ngpres{ a^{\alpha^{m}(\alpha^n-\beta^n)} }^K$, \smallskip \\
&&& $G_{\Lambda}(R)^\mathrm{ab}\cong \Z_{|\alpha^m\beta(\alpha^{n}-\beta^{n})|}$. \medskip
\end{tabular}
\begin{tabular}{r@{~~}r@{~~}p{7.6cm}@{~}p{6.55cm}}
\brkt{2} & \multicolumn{2}{@{}l}{$|\alpha|=1$, $|\beta|\geq 2$, and after recursively pruning} & \\
& \multicolumn{2}{@{}l}{all source leaves the digraph $\Lambda$ becomes} & in which case \medskip \\
&\brkt{a} & $\Lambda_{s}=\Lambda(n)$ \mydots & $G_{\Lambda}(R)\cong\Z_{|\alpha^n-\beta^n|}$, \smallskip \\
&\brkt{b} & $\Lambda_{s}=\Lambda(n;\rarw{m})$ \brkt{$m\geq 1$} \mydots & $G_{\Lambda}(R)\cong\Z_{|\beta^m(\alpha^n-\beta^n)|}$, \smallskip \\
&\brkt{c} & $\Lambda_{s}=\Lambda(n,d)$ \brkt{$\frac{n}{2}>d\geq 1$} \mydots & $G_{\Lambda}(R)\cong\Z_{|(\alpha\beta)^{n-d}-(\alpha\beta)^{d}|}$, \smallskip \\
&\brkt{d} & $\Lambda_{s}=\Lambda(n,d;\rarw{m})$ \brkt{$\frac{n}{2}>d\geq 1$, $m\geq 1$} \mydots & $G_{\Lambda}(R)\cong\Z_{|\beta^{m}((\alpha\beta)^{n-d}-(\alpha\beta)^{d})|}$, \smallskip \\
&\brkt{e} & $\Lambda_{s}=\Lambda(n;\larw{m},\rarw{\ell})$ \brkt{$m\geq 1$, $\ell\geq 1$} \mydots & $G_{\Lambda}(R)\cong\Z_{|\beta^{\ell}(\alpha^{n}-\beta^{n})|}$. \medskip
\end{tabular}
\begin{tabular}{r@{~~}r@{~~}p{7.6cm}@{~}p{6.55cm}}
\brkt{3} & \multicolumn{2}{@{}l}{$|\alpha|\geq 2$, $|\beta|=1$, and after recursively pruning} & \\
& \multicolumn{2}{@{}l}{all sink leaves the digraph $\Lambda$ becomes} & in which case \medskip \\
&\brkt{a} & $\Lambda_{t}=\Lambda(n)$ \mydots & $G_{\Lambda}(R)\cong\Z_{|\alpha^n-\beta^n|}$, \smallskip \\
&\brkt{b} & $\Lambda_{t}=\Lambda(n;\larw{m})$ \brkt{$m\geq 1$} \mydots & $G_{\Lambda}(R)\cong\Z_{|\alpha^m(\alpha^n-\beta^n)|}$, \smallskip \\
&\brkt{c} & $\Lambda_{t}=\Lambda(n,d)$ \brkt{$\frac{n}{2}>d\geq 1$} \mydots & $G_{\Lambda}(R)\cong\Z_{|(\alpha\beta)^{n-d}-(\alpha\beta)^{d}|}$, \smallskip \\
&\brkt{d} & $\Lambda_{t}=\Lambda(n,d;\larw{m})$ \brkt{$\frac{n}{2}>d\geq 1$, $m\geq 1$} \mydots & $G_{\Lambda}(R)\cong\Z_{|\alpha^{m}((\alpha\beta)^{n-d}-(\alpha\beta)^{d})|}$, \smallskip \\
&\brkt{e} & $\Lambda_{t}=\Lambda(n;\rarw{m},\larw{\ell})$ \brkt{$m\geq 1$, $\ell\geq 1$} \mydots & $G_{\Lambda}(R)\cong\Z_{|\alpha^{\ell}(\alpha^{n}-\beta^{n})|}$. \medskip \\
\brkt{4} & \multicolumn{2}{@{}l}{$|\alpha|=|\beta|=1$, in which case \mydots} & $G_{\Lambda}(R)\cong\Z_{2}$. \smallskip
\end{tabular}
\end{center}
\end{maintheorem}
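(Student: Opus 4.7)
The plan has four stages. First, constrain the combinatorial shape of $\Lambda$ from the vertex--arc balance. Second, apply an analogue of Theorem~\ref{thm:pridecycpres} to the embedded $n$-cycle to obtain the arithmetic conditions on $\alpha,\beta$ and the identity $a^\alpha=b^\beta$ in~$K$. Third, use that identity to justify a leaf-pruning Tietze reduction in cases where $|\alpha|=1$ or $|\beta|=1$. Fourth, identify $G_\Lambda(R)$ in each resulting shape by a direct Tietze computation.

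For stages~(i) and (ii), since $|V(\Lambda)|=|A(\Lambda)|$ the cyclomatic number of $\Lambda$ equals its number of weakly connected components, and any non-trivial free product is infinite; hence $\Lambda$ must be weakly connected and therefore unicyclic, its unique undirected cycle $C$ has length $n$, and every other arc lies on a directed tree attached to $C$. The natural homomorphism from $G_C(R)$ to $G_\Lambda(R)$, together with finiteness, reduces the analysis of $C$ to Theorem~\ref{thm:pridecycpres} in the fully-directed case and to an analogous argument in the bi-directed case, yielding $\alpha\neq 0$, $\beta\neq 0$, $(\alpha,\beta)=1$, $\alpha^n-\beta^n\neq 0$, and $a^\alpha=b^\beta$ in~$K$; non-triviality of $G_\Lambda(R)$ then follows from the non-trivial cyclic subgroup generated by the cycle generators.

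For stage~(iii), when $|\alpha|=1$ the identity $a^\alpha=b^\beta$ in $K$ forces $a=b^{\pm\beta}$ in $K$ and consequently $K\cong\Z$. At any source leaf $v$ of $\Lambda$ with outgoing arc $(v,u)$, the homomorphism $K\to G_\Lambda(R)$ defined by $a\mapsto x_v$, $b\mapsto x_u$ then exhibits $x_v=x_u^{\pm\beta}$, so a Tietze transformation eliminates $x_v$ while rendering the relator $R(x_v,x_u)$ consequent, and yields $G_\Lambda(R)\cong G_{\Lambda\setminus\{v\}}(R)$. Iterating gives $G_\Lambda(R)\cong G_{\Lambda_s}(R)$; symmetrically, $G_\Lambda(R)\cong G_{\Lambda_t}(R)$ when $|\beta|=1$. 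In case~(4) both prunings apply simultaneously, leaving only the cycle, whose analysis with $|\alpha|=|\beta|=1$ produces $\Z_2$.

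For stage~(iv), the pruned core $\Lambda_s$, $\Lambda_t$, or $\Lambda$ itself is a unicyclic digraph without the relevant type of leaf, and a short combinatorial enumeration shows that it must belong to one of the classes in Figure~\ref{fig:digraphs}. For each such shape I would use $a^\alpha=b^\beta$ together with direct Tietze manipulation: for outgoing-tail shapes such as $\Lambda(n;\rarw{m})$, each tail generator is related to its parent by the power law $x_{i+1}^{\beta}=x_i^\alpha$, collapsing the tail onto the cycle and multiplying the cyclic invariant by $|\beta|^m$; for bi-directed shapes $\Lambda(n,d)$, the two directed paths from the distinguished source to the distinguished sink contribute equal elements by $a^\alpha=b^\beta$, and the presentation reduces to a quotient of $K$ by a single normal closure. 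The main obstacle is this case-by-case stage: verifying each of the fifteen-plus shapes and ruling out every other unicyclic shape requires a delicate Tietze analysis. The hypotheses $\delta_a=\delta_b=1$ in cases~(1d)--(1f) are particularly subtle, arising precisely to force the corresponding $K$-quotient to be finite and resting on a careful analysis of the exponent-sum arithmetic inside~$K$.
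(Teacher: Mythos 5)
Your stage~(ii) contains the critical gap. The natural homomorphism induced by including the cycle $C$ into $\Lambda$ goes from $G_{C}(R)$ \emph{to} $G_{\Lambda}(R)$, and there is no reason for it to be injective; consequently finiteness of $G_{\Lambda}(R)$ tells you nothing about $G_{C}(R)$, and you cannot invoke Theorem~\ref{thm:pridecycpres} on the embedded cycle. (Nor can you realise $G_{C}(R)$ as a quotient of $G_{\Lambda}(R)$: killing the tree generators turns the arcs joining the trees to $C$ into extra power relators on the cycle generators.) Moreover the embedded $n$-cycle is in general not coherently directed, so Theorem~\ref{thm:pridecycpres} would not apply to it even if finiteness did transfer. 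The paper obtains $\alpha\neq 0$, $\beta\neq 0$, and $a^{\alpha}=b^{\beta}$ in $K$ by a different mechanism: Pride's non-positive-curvature result \cite[Theorem~4]{Pride}, in the form of Corollary~\ref{cor:trianglefreeimpliesinfinite} applied to the \emph{whole} presentation $P_{\Lambda}(R)$ under the girth hypothesis, shows that if $K$ has Property~$W_{1}$ then $G_{\Lambda}(R)$ is infinite, and Proposition~\ref{prop:center} converts the failure of Property~$W_{1}$ into exactly these conditions. This input is essential and is not recoverable from the cycle subgraph alone. For the same reason your claim that non-triviality follows from ``the non-trivial cyclic subgroup generated by the cycle generators'' presumes an injectivity you have not established; the paper instead reads off non-triviality from the explicit identifications of the groups (or from their non-trivial abelianizations in Cases~(1d)--(1f)).

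A second, smaller gap is in stage~(iv): a unicyclic digraph with no source leaves can still have arbitrarily many sinks and attached trees, so ``a short combinatorial enumeration'' does not by itself reduce to the classes of Figure~\ref{fig:digraphs}. The reduction again uses finiteness: killing all generators except two suitably chosen ones exhibits quotients such as $\Z_{(\alpha,\beta)}\ast\Z_{(\alpha,\beta)}$, $\Z_{|\alpha|}\ast\Z_{|\beta|}$, or $\Z_{|\beta|}\ast\Z_{|\beta|}$, which are infinite unless $(\alpha,\beta)=1$, there is at most one source and at most one sink, and every source is adjacent to every sink; this is the content of Lemmas~\ref{lem:M2allowedbalancedgraphs} and~\ref{lem:oneofalphabetaisone}. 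Your remaining stages --- the pruning argument when $|\alpha|=1$ or $|\beta|=1$, the derivation of $\delta_{a}=\delta_{b}=1$ from finiteness of suitable images, and the Tietze collapse of tails onto the cycle --- do match the paper's route once these two missing inputs are supplied.
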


\begin{remark}
Using our terminology, Theorem~\ref{thm:pridecycpres} is a statement about the digraphs $\Lambda(n)$ \brkt{$n\geq 4$} and thus follows from Cases \brkt{1a}, \brkt{2a}, \brkt{3a}, \brkt{4} of Theorem~\ref{mainthm:balanced}. Note that in Case \brkt{4} the numerical restrictions imply that $|\alpha^{n}-\beta^{n}|=2$, whence we also obtain $G_{\Lambda}(R)\cong\Z_{|\alpha^n-\beta^n|}$.
\end{remark}

Theorem~\ref{mainthm:balanced} has the following immediate corollary.

\begin{maincorollary}\label{maincor:rank<3}
With the notation of Theorem~\ref{mainthm:balanced}, if $G_{\Lambda}(R)$ is finite, then $\mathrm{rank}(G_{\Lambda}(R))\in\sset{1,2}$.
\end{maincorollary}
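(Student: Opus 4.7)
The plan is to derive the corollary directly from Theorem~\ref{mainthm:balanced} by inspecting the structural description of $G_{\Lambda}(R)$ in each of the enumerated cases. Since Theorem~\ref{mainthm:balanced} guarantees that $G_{\Lambda}(R)$ is non-trivial whenever it is finite, we automatically have $\mathrm{rank}(G_{\Lambda}(R))\geq 1$, so it suffices to show $\mathrm{rank}(G_{\Lambda}(R))\leq 2$ in every case.

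First I would dispatch the cyclic cases. In Cases \brkt{1a}, \brkt{1b}, \brkt{1c}, all sub-cases of \brkt{2}, all sub-cases of \brkt{3}, and Case~\brkt{4}, Theorem~\ref{mainthm:balanced} gives an explicit isomorphism $G_{\Lambda}(R)\cong\Z_{N}$ for some integer $N\neq 0$. In each of these situations the group is therefore cyclic of order $|N|\geq 2$ and hence has rank exactly $1$.

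Next I would handle the three remaining cases \brkt{1d}, \brkt{1e}, \brkt{1f}. In each of these, Theorem~\ref{mainthm:balanced} yields an isomorphism of the form $G_{\Lambda}(R)\cong K/\ngpres{w}^{K}$, where $K=\pres{a,b}{R(a,b)}$ and $w\in K$ is an explicit element. Since $K$ is generated by the two elements $a$ and $b$, every quotient of $K$ is also generated by at most two elements; in particular $\mathrm{rank}(G_{\Lambda}(R))\leq 2$ in these three cases.

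Combining the two paragraphs above with the lower bound supplied by the non-triviality statement in Theorem~\ref{mainthm:balanced} gives $\mathrm{rank}(G_{\Lambda}(R))\in\sset{1,2}$ in every case, proving the corollary. There is no serious obstacle here: the entire argument is a case inspection of the conclusions of Theorem~\ref{mainthm:balanced}, with the only non-routine observation being that the quotients appearing in Cases \brkt{1d}--\brkt{1f} are automatically $2$-generated because the group $K$ is.
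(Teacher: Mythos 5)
Your proposal is correct and is exactly the reasoning the paper intends: the paper labels the corollary ``immediate'' because every case of Theorem~\ref{mainthm:balanced} except \brkt{1d}--\brkt{1f} gives an explicit non-trivial finite cyclic group (rank~$1$), while in \brkt{1d}--\brkt{1f} the group is a quotient of the two-generator group $K$ (rank at most~$2$) and is non-trivial by the theorem. No further comment is needed.
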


There exist infinitely many examples of relators $R(a,b)$ that satisfy the conditions of Cases (1d)--(1f) of Theorem~\ref{mainthm:balanced}, for example $R(a,b)=(ab)^qb$ ($q\geq 2$). However, all examples that we are aware of yield abelian groups $K$, and hence $G_\Lambda(R)$ is a finite cyclic group. For this reason we have been unable to construct any example where we cannot determine finiteness, or otherwise, of $G_\Lambda(R)$. We therefore pose:

\begin{question} \label{q:question}
Does there exist a word $R(a,b)=a^{\alpha_1}b^{\beta_1}\cdots a^{\alpha_t}b^{\beta_t}$ with $t\geq 1$, $\alpha_i,\beta_i\in\Z\smallsetminus\{0\}$ \brkt{$1\leq i \leq t$}, $\alpha=\sum_{i=1}^t\alpha_i$, $\beta=-\sum_{i=1}^t\beta_i$, $|\alpha|\geq 2$, $|\beta|\geq 2$, $(\alpha,\beta)=1$, $\delta_{a}=(\alpha_1,\ldots,\alpha_t)=1$, $\delta_{b}=(\beta_1,\ldots,\beta_t)=1$ such that $a^\alpha=b^\beta$ in $K=\pres{a,b}{R(a,b)}$ and $K$ is not abelian?
\end{question}

If Question~\ref{q:question} has a negative answer, then Theorem~\ref{mainthm:balanced} provides the following nice dichotomy directly generalizing Theorem~\ref{thm:pridecycpres}: \emph{If $\Lambda$ is a non-empty finite digraph with an equal number of vertices and arcs whose underlying undirected graph has girth at least 4 and $R(a,b)$ is a cyclically reduced word that involves both $a$ and $b$, then $G_{\Lambda}(R)$ is non-trivial and it is either finite cyclic or infinite.}

\begin{acknowledgement*}
We would like to thank the anonymous referee, whose remarks gave rise to several improvements of this paper. In particular, of the proofs presented in Section~\ref{subsec:proofcase23}.
\end{acknowledgement*}

\section{Pride's Property \texorpdfstring{$W_{1}$}{W1}}
\label{sec:pride}

In~\cite[page~246]{Pride} Pride introduced the following property: a two-generator group with generators $a$ and $b$ is said to have \emph{Property~$W_1$ \brkt{with respect to $a$ and $b$}} if no non-empty word of the form $a^kb^{-\ell}$ ($k,\ell\in\Z$) is equal to the identity in that group. In fact, not just Property~$W_1$ but \emph{Properties~$W_p$} ($p\in\mathbb{Z}$, $p\geq 1$) are defined there. As soon as the elements~$a$ and~$b$ have infinite order, Property~$W_{p}$ corresponds to the Gersten--Stallings angle $\sphericalangle(\gpres{a},\gpres{b};\sset{1})$ being at most~$\frac{\pi}{p+1}$ \cite{Stallings}. Under the hypothesis that the girth of the underlying undirected graph of $\Lambda$ is at least~4, the condition that $K$ has Property~$W_1$ can therefore be thought of as a condition of non-positive curvature. That non-positive curvature, in this sense, is a property that corresponds to infinite groups is a consequence of the following immediate corollary to \cite[Theorem~4]{Pride}. It forms a crucial ingredient to our methods.

\begin{corollary}[to {\cite[Theorem~4]{Pride}}]\label{cor:trianglefreeimpliesinfinite}
Let $\Lambda$ and $R(a,b)$ be as in Theorem~\ref{mainthm:balanced}. If $K$ has Property~$W_1$, then $G_{\Lambda}(R)$ is infinite.
\end{corollary}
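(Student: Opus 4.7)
The plan is to deduce the corollary as a direct application of Pride's Theorem~4 from~\cite{Pride}. That theorem is a general infiniteness criterion for groups defined by presentations in which every relator involves exactly two generators: it requires a girth hypothesis on the associated undirected graph, together with a local non-positive curvature condition at each vertex, stated in terms of Gersten--Stallings angles between the cyclic subgroups generated by the relevant letters in the two-generator relator groups.

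First, I would observe that $P_{\Lambda}(R)$ is exactly of this form: the generators correspond to vertices of $\Lambda$, the relators correspond to arcs, and every relator $R(x_u,x_v)$ involves precisely two distinct generators. The underlying undirected graph has girth $n\geq 4$ by hypothesis. For such girth, the non-positive curvature condition that Pride's theorem demands at each vertex reduces, via a standard angle-count of Gauss--Bonnet type, to the requirement that each pair of relators incident at a common vertex contribute a Gersten--Stallings angle of at most~$\pi/2$.

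Next, I would show that Property~$W_{1}$ of $K$ yields this bound. Property~$W_{1}$ immediately implies that $a$ and $b$ have infinite order in $K$ (take $\ell=0$ or $k=0$ in the definition), so the Gersten--Stallings angle $\sphericalangle(\gpres{a},\gpres{b};\sset{1})$ is defined; moreover, as recalled in the paragraph just before the corollary, Property~$W_{1}$ is equivalent to this angle being at most~$\pi/2$. At any vertex $v$ of $\Lambda$, each incident arc $(u,v)$ or $(v,w)$ identifies $\gpres{x_v}$ with $\gpres{b}$ or $\gpres{a}$, respectively, inside its own copy of $K$, so in every case Property~$W_{1}$ delivers the required angle bound at $v$. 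Pride's Theorem~4 then applies and yields the desired infiniteness.

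The main obstacle is the careful bookkeeping in translating between Pride's formalism and the present one: verifying that the angle bound supplied by Property~$W_{1}$ does apply uniformly at every vertex irrespective of the orientations of the incident arcs, so that Pride's vertex hypotheses are simultaneously satisfied throughout $\Lambda$.
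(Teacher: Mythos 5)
Your derivation matches the paper's: the result is stated there as an immediate consequence of Pride's Theorem~4, whose hypotheses (every relator involves exactly two generators, girth of the associated undirected graph at least~4, and each two-generator relator group having Property~$W_1$ --- equivalently, Gersten--Stallings angle at most $\pi/2$) are exactly what you verify for $P_{\Lambda}(R)$. The only cosmetic slip is that the angles in Pride's set-up are attached to the edge groups (one per relator on a cycle), not to pairs of relators at a vertex, but since Pride states his hypothesis directly in terms of Property~$W_p$ this does not affect the argument.
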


It is therefore important to study groups that do not have Property~$W_1$. When we want to determine whether $K$ is of that kind, we first observe that if a non-empty word $a^kb^{-\ell}$ is equal to the identity in~$K$, then both $k\neq 0$ and $\ell\neq 0$. Indeed, if $k=0$, we have $b^\ell=1$ in $K$. Recall from the statement of Theorem~\ref{mainthm:balanced} that $R(a,b)$ is cyclically reduced and involves both~$a$ and~$b$. Therefore, by the Freiheitssatz for one-relator groups \cite{Magnus}, the element $b$ has infinite order in $K$, which implies that~$\ell=0$. Hence, $a^kb^{-\ell}$ is the empty word and we obtain a contradiction. Similarly, $\ell\neq 0$.

\begin{prop}[{\cite[page~248]{Pride}}]\label{prop:center}
If there exist $k,\ell\in\Z\smallsetminus\{0\}$ with $a^k=b^\ell$ in $K$, then $\alpha\neq 0$, $\beta\neq 0$, and $a^\alpha=b^\beta$ in $K$.
\end{prop}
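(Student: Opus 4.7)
The plan is to split the proof in two: first show that $\alpha\neq 0$ and $\beta\neq 0$, which is a quick abelianization argument, and then tackle the substantive equality $a^\alpha=b^\beta$ in $K$ by producing a central element and exploiting the rigidity of one-relator groups.

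For the first part I would work in the abelianization $K^{\mathrm{ab}}\cong \Z^2/\langle(\alpha,-\beta)\rangle$, where the assumption $a^k=b^\ell$ becomes $(k,-\ell)\in \Z(\alpha,-\beta)$. If $\alpha=0$ this forces $k=0$, contradicting $k\neq 0$; the case $\beta=0$ is symmetric. Hence $\alpha\neq 0$ and $\beta\neq 0$, and there is a unique integer $n$, necessarily nonzero, with $k=n\alpha$ and $\ell=n\beta$, so that $a^{n\alpha}=b^{n\beta}$ in $K$. The element $z:=a^{n\alpha}=b^{n\beta}$ commutes with $a$ (it is a power of $a$) and with $b$ (it is a power of $b$), hence is central in $K$; by the Freiheitssatz $a$ has infinite order, so $z\neq 1$, and $K$ has a nontrivial centre.

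It remains to upgrade the central equality $(a^\alpha)^n=(b^\beta)^n$ to $a^\alpha=b^\beta$. In a general group this implication fails, so the one-relator structure must be used. One route is via the Murasugi--Pietrowski classification of two-generator one-relator groups with nontrivial centre, which forces $K$, up to a change of generators, into the form $\langle x,y\mid x^p=y^q\rangle$ with centre generated by the common power; tracking the images of $a^\alpha$ and $b^\beta$ through this isomorphism places them in a common infinite cyclic subgroup of $K$, within which the equation $(a^\alpha)^n=(b^\beta)^n$ does give $a^\alpha=b^\beta$. An alternative route, closer to the style of Pride's original argument, is to work directly with centralizers: in a torsion-free one-relator group centralizers of nontrivial elements are cyclic, and this, applied to $C_K(a)$ and $C_K(b)$, is enough to place $a^\alpha$ and $b^\beta$ in a common cyclic subgroup containing $z$. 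The proper-power case $R=S^p$, in which $K$ has torsion, would be handled by a parallel argument passing through the torsion-free overgroup $\langle a,b\mid S\rangle$ while keeping track of the normal closure of $S$. In either approach the main obstacle is precisely this passage from $u^n=v^n$ to $u=v$, which is where the one-relator hypothesis is essential and where Pride's concise argument is concentrated.
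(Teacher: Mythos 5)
The paper itself offers no proof of Proposition~\ref{prop:center}: it is imported verbatim from Pride \cite{Pride} with a citation to page~248, so there is no in-paper argument to measure yours against. On its own terms, the first half of what you write is correct and is surely how any proof must begin: the abelianization $\Z^2/\langle(\alpha,-\beta)\rangle$ forces $\alpha\neq 0$, $\beta\neq 0$ and $(k,\ell)=n(\alpha,\beta)$ for some $n\neq 0$, and $z=a^{n\alpha}=b^{n\beta}$ is central and, by the Freiheitssatz, non-trivial.

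The genuine gap is the step you yourself single out as ``the main obstacle'': passing from $(a^{\alpha})^{n}=(b^{\beta})^{n}$ to $a^{\alpha}=b^{\beta}$. Neither of your two routes closes it. In the first route, the assertion that the Murasugi--Pietrowski normal form ``places $a^{\alpha}$ and $b^{\beta}$ in a common infinite cyclic subgroup'' carries the full weight of the proposition --- once they lie in a common infinite cyclic subgroup the cancellation of $n$ is immediate --- and no tracking is actually performed; moreover the classification does not reduce every two-generator one-relator group with non-trivial centre to $\langle x,y\mid x^{p}=y^{q}\rangle$ (Pietrowski's normal form is an iterated amalgam of cyclic groups, which is strictly more general). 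In the second route the stated premise is false: centralizers of non-trivial elements of torsion-free one-relator groups need not be cyclic. In $\langle a,b\mid a^{-1}b^{-1}ab\rangle\cong\Z^{2}$ every centralizer is $\Z^{2}$, and in $\langle a,b\mid b^{-1}aba^{-2}\rangle$ the centralizer of $a$ is isomorphic to the dyadic rationals $\Z[1/2]$, which is not even finitely generated; cyclicity of centralizers is B.~B.~Newman's theorem for one-relator groups \emph{with torsion}, not the torsion-free case. And even granting cyclic centralizers, knowing that $C_{K}(a)$ and $C_{K}(b)$ are each cyclic places $a^{\alpha}$ in one cyclic subgroup and $b^{\beta}$ in another; it does not place them in a \emph{common} one, which is what the cancellation requires. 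So the one-relator input that makes the proposition true is still missing. To complete the argument along these lines you would need, for instance, Murasugi's explicit description of the centre of a two-generator one-relator group with non-trivial centre, or Collins's exceptional-intersection theorem for the Magnus subgroups $\langle a\rangle$ and $\langle b\rangle$, together with an argument identifying the generator of $\langle a\rangle\cap\langle b\rangle$ as $a^{\alpha}=b^{\beta}$ rather than some proper power $a^{m\alpha}=b^{m\beta}$; as it stands, your proof establishes only the latter, weaker statement.
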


Therefore, $K$ does not have Property~$W_1$ if and only if $\alpha\neq 0$, $\beta\neq 0$, and $a^\alpha=b^\beta$ in $K$. Note that the condition $a^\alpha=b^\beta$ in $K$ is decidable since the word problem is solvable for one-relator groups.\medskip

\begin{excursion}
\label{exc:magnus}
Property~$W_1$ is further related to one-relator groups whose Magnus subgroups have exceptional intersection; such groups are studied in~\cite{Collins04}, \cite{Collins08}, \cite{Howie05}, \cite{EdjvetHowie08}. More precisely, the Magnus subgroups $\gpres{a}$ and $\gpres{b}$ of $K$ are said to have \emph{exceptional intersection} if $\gpres{a}\cap\gpres{b}\cong\Z$ (see, \egcomma\cite{Collins04}). The following characterization of two-generator one-relator groups whose Magnus subgroups have exceptional intersection, and the connection with Property~$W_1$, does not seem to have been recorded explicitly before.

\begin{lemma}\label{lem:notW1isMI}
The Magnus subgroups $\gpres{a}$ and $\gpres{b}$ of $K$ have exceptional intersection if and only if $\alpha\neq 0$, $\beta\neq 0$, and $a^\alpha=b^\beta$ in $K$.
\end{lemma}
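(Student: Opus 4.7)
The plan is to observe that both directions of the equivalence follow almost immediately from the machinery already assembled in the excerpt, together with the basic fact that every subgroup of $\Z$ is either trivial or infinite cyclic.

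First I would recall that, by the Freiheitssatz for one-relator groups (already invoked just before Proposition~\ref{prop:center}), the Magnus subgroups $\gpres{a}$ and $\gpres{b}$ of $K$ are both infinite cyclic, since $R(a,b)$ is cyclically reduced and involves both generators. Consequently $\gpres{a}\cap\gpres{b}$ is a subgroup of $\gpres{a}\cong\Z$ and is therefore either trivial or isomorphic to~$\Z$. Thus the Magnus subgroups have exceptional intersection if and only if $\gpres{a}\cap\gpres{b}$ is non-trivial.

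For the forward direction I would assume $\gpres{a}\cap\gpres{b}\cong\Z$ and pick a non-identity element $g\in\gpres{a}\cap\gpres{b}$; writing $g=a^k=b^\ell$ with $a^k\neq 1$ forces $k\neq 0$ (since $a$ has infinite order), and the argument recorded just before Proposition~\ref{prop:center} then forces $\ell\neq 0$ as well. Proposition~\ref{prop:center} immediately yields $\alpha\neq 0$, $\beta\neq 0$, and $a^\alpha=b^\beta$ in $K$. For the converse I would assume $\alpha\neq 0$, $\beta\neq 0$, and $a^\alpha=b^\beta$ in $K$, and simply observe that $a^\alpha\in\gpres{a}\cap\gpres{b}$ and $a^\alpha\neq 1$ because $a$ has infinite order in $K$; hence the intersection is a non-trivial subgroup of $\gpres{a}\cong\Z$ and must itself be isomorphic to $\Z$.

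There is no substantial obstacle here: once one has recognized that the Magnus subgroups are infinite cyclic (so the intersection is automatically either trivial or $\Z$), the lemma is a direct repackaging of Proposition~\ref{prop:center} and the elementary remarks preceding it. The only point that deserves explicit mention in the write-up is the invocation of the Freiheitssatz to guarantee that $a$ (equivalently $b$) has infinite order, since this is what rules out the trivial case $a^\alpha=1$ in the converse direction and what allows one to translate ``non-trivial intersection'' into ``a non-trivial relation $a^k=b^\ell$''.
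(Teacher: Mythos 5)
Your proof is correct and follows essentially the same route as the paper's: the converse direction observes that $a^\alpha=b^\beta$ gives a non-trivial element of $\gpres{a}\cap\gpres{b}$ inside $\gpres{a}\cong\Z$, and the forward direction extracts a relation $a^k=b^\ell$ with $k,\ell\neq 0$ from a non-trivial element of the intersection and invokes Proposition~\ref{prop:center}. You are merely a bit more explicit than the paper in citing the Freiheitssatz to justify that the Magnus subgroups are infinite cyclic, which the paper takes for granted.
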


\begin{proof}
If $\alpha\neq 0$, $\beta\neq 0$, and $a^\alpha=b^\beta$ in $K$, then $\Z\cong\gpres{a^\alpha}=\gpres{b^\beta}\leq\gpres{a}\cap\gpres{b}\leq\gpres{a}\cong\Z$, so $\gpres{a}\cap\gpres{b}\cong\Z$. Conversely, if $\gpres{a}\cap\gpres{b}\cong\Z$, then an arbitrary non-trivial element of this intersection is of the form $a^k$ for some $k\in\Z\smallsetminus\{0\}$ and simultaneously of the form $b^{\ell}$ for some $\ell\in\Z\smallsetminus\{0\}$. So $a^k=b^\ell$ in $K$ and, by Proposition~\ref{prop:center}, we have $\alpha\neq 0$, $\beta\neq 0$, and $a^\alpha=b^\beta$ in $K$.
\end{proof}
\end{excursion}
%
%
\enlargethispage{6pt}

Question~\ref{q:question} therefore concerns groups $K$ where $\gpres{a}$ and $\gpres{b}$ have exceptional intersection.

\section{Proving Theorem~\ref{mainthm:balanced}}
\label{sec:proof}

\subsection{Strategy}

Before giving the actual proof of Theorem~\ref{mainthm:balanced} we sketch the strategy and discuss some lemmas. Lemma~\ref{lem:trackingargument}, below, is the first of these and will serve as a general tool to simplify the presentations that arise in the subsequent parts of the paper. (Recall that we can always suppose that $\alpha\neq 0$, $\beta\neq 0$, and $a^{\alpha}=b^{\beta}$ in $K$ for otherwise, by Corollary~\ref{cor:trianglefreeimpliesinfinite} and Proposition~\ref{prop:center}, the group~$K$ has Property~$W_{1}$ and thus $G_{\Lambda}(R)$ is infinite.) In Section~\ref{subsec:proofcase1} we turn to the setting of Theorem~\ref{mainthm:balanced} and suppose that $|\alpha|\geq 2$ and $|\beta|\geq 2$. The strategy is to obtain conditions that must be satisfied if $G_{\Lambda}(R)$ is a finite group. We then analyse $G_{\Lambda}(R)$ under these conditions. In some cases we are able to show that $G_{\Lambda}(R)$ is a finite cyclic group, in others at least that $\mathrm{rank}(G_{\Lambda}(R))\in\sset{1,2}$. Then, in Section~\ref{subsec:proofcase23}, we consider the remaining cases and suppose without loss of generality that $|\alpha|=1$ and $|\beta|\geq 1$. The fact that $|\alpha|=1$ enables us to recursively prune the digraphs without changing the isomorphism types of the resulting groups.

\begin{lemma}\label{lem:trackingargument}
Let $R(a,b)$ be a word such that $a^{\alpha}=b^{\beta}$ in $K$ and let $G$ be a group defined by a presentation $\pres{\mathcal{X}}{\mathcal{R}}$. Further suppose that there are distinct generators $x_{i},x_{j}\in\mathcal{X}$ such that $R(x_{i},x_{j})\in\mathcal{R}$. Then the following hold:
\begin{itemize}
\item[\brkt{a}] If $x_{i}^{\gamma}\in\mathcal{R}$ for some $\gamma\in\mathbb{Z}$ with $(\alpha,\gamma)=1$, then every $p\in\mathbb{Z}$ with $p\alpha\equiv 1~(\text{mod}~\gamma)$ yields a new presentation $\pres{\mathcal{X}\smallsetminus\{x_{i}\}}{\mathcal{S}}$ of $G$. The relators $\mathcal{S}$ are obtained from $\mathcal{R}$ by removing $R(x_{i},x_{j})$ and~$x_{i}^{\gamma}$, replacing all remaining occurrences of $x_{i}$ by $x_{j}^{p\beta}$, and adjoining~$x_{j}^{\beta\gamma}$.
\item[\brkt{b}] If $x_{j}^{\gamma}\in\mathcal{R}$ for some $\gamma\in\mathbb{Z}$ with~$(\beta,\gamma)=1$, then every $p\in\mathbb{Z}$ with $p\beta\equiv 1~(\text{mod}~\gamma)$ yields a new presentation $\pres{\mathcal{X}\smallsetminus\{x_{j}\}}{\mathcal{S}}$ of $G$. The relators $\mathcal{S}$ are obtained from $\mathcal{R}$ by removing $R(x_{i},x_{j})$ and~$x_{j}^{\gamma}$, replacing all remaining occurrences of $x_{j}$ by $x_{i}^{p\alpha}$, and adjoining~$x_{i}^{\alpha\gamma}$.
\end{itemize}
\end{lemma}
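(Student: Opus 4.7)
I would prove the lemma as a Tietze-transformation argument, focusing on part~(a) since part~(b) follows by exchanging the roles of $x_{i}$ and $x_{j}$. The key preliminary observation is that because $a^{\alpha}b^{-\beta}$ lies in the normal closure of $R(a,b)$ in the free group on $a,b$, substituting $a=x_{i}$ and $b=x_{j}$ shows that $x_{i}^{\alpha}=x_{j}^{\beta}$ holds in $G$. Combining this with $x_{i}^{\gamma}=1$ and choosing $r\in\Z$ with $p\alpha-r\gamma=1$ (which exists because $p\alpha\equiv 1~(\text{mod}~\gamma)$), I obtain
\[
x_{i} \;=\; x_{i}^{p\alpha-r\gamma} \;=\; (x_{i}^{\alpha})^{p}(x_{i}^{\gamma})^{-r} \;=\; x_{j}^{p\beta}
\]
in $G$, which identifies $x_{i}$ as a redundant generator.

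Next, I would set $H=\pres{\mathcal{X}\smallsetminus\{x_{i}\}}{\mathcal{S}}$ and construct mutually inverse homomorphisms between $G$ and $H$. For $\phi\colon G\to H$, define $\phi(x_{i})=x_{j}^{p\beta}$ and $\phi(x_{k})=x_{k}$ for $k\neq i$. Well-definedness on the substituted relators of $\mathcal{S}$ is automatic, so the non-trivial checks are $\phi(x_{i}^{\gamma})=x_{j}^{p\beta\gamma}=(x_{j}^{\beta\gamma})^{p}=1$ in $H$ (immediate from $x_{j}^{\beta\gamma}\in\mathcal{S}$) and $\phi(R(x_{i},x_{j}))=R(x_{j}^{p\beta},x_{j})=1$ in $H$. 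For the latter, I observe that the word lies in the cyclic subgroup $\gpres{x_{j}}$ of $H$, so its value is determined by its exponent sum; that sum equals $p\beta\cdot\alpha+(-\beta)=(p\alpha-1)\beta=r\gamma\beta$, a multiple of $\beta\gamma$, hence trivial. The converse map $\psi\colon H\to G$ is inclusion on generators; all substituted relators of $\mathcal{S}$ become actual relators of $\mathcal{R}$ after applying $x_{i}=x_{j}^{p\beta}$ in $G$, so the only verification needed is $x_{j}^{\beta\gamma}=1$ in $G$. Here, $x_{i}^{\gamma}=1$ combined with $x_{i}=x_{j}^{p\beta}$ yields $x_{j}^{p\beta\gamma}=1$, and $x_{i}^{\alpha}=x_{j}^{\beta}$ combined with the same substitution yields $x_{j}^{r\beta\gamma}=1$; since $p\alpha-r\gamma=1$ forces $\gcd(p,r)=1$, it follows that $x_{j}^{\beta\gamma}=1$.

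The composites $\phi\psi$ and $\psi\phi$ are immediately the identity on generators, using the key identity once more to check $\psi\phi(x_{i})=x_{j}^{p\beta}=x_{i}$ in $G$. This gives $G\cong H$, completing the proof. The only real technical points are the exponent-sum computation for $R(x_{j}^{p\beta},x_{j})$ and the coprimality $\gcd(p,r)=1$; neither is deep, but both hinge on the B\'ezout relation, so the entire argument pivots on the hypothesis $(\alpha,\gamma)=1$.
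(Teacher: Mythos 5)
Your proof is correct and takes essentially the same route as the paper's: both hinge on the B\'ezout relation giving $x_{i}=x_{j}^{p\beta}$, on the observation that $R(x_{j}^{p\beta},x_{j})$ collapses to the power of $x_{j}$ given by its exponent sum $(p\alpha-1)\beta$, and on the coprimality of the two B\'ezout coefficients to extract the single relator $x_{j}^{\beta\gamma}$. The paper packages this as a chain of Tietze transformations rather than a pair of explicit mutually inverse homomorphisms, but the content is identical.
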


\begin{proof}
The proofs of the two parts are similar and one only needs to interchange the roles of $x_i$ and~$x_j$ and the roles of $\alpha$ and $\beta$ (wherever necessary). We therefore prove Part (a) only. For simplicity of notation let $\mathcal{R}_{1}$ be obtained from $\mathcal{R}$ by removing $R(x_{i},x_{j})$ and $x_{i}^{\gamma}$. Further, let $\mathcal{R}_{2}$ be obtained from $\mathcal{R}_{1}$ by replacing all occurrences of $x_{i}$ by $x_{j}^{p\beta}$ so that $\mathcal{S}$ is obtained from $\mathcal{R}_{2}$ by adjoining~$x_{j}^{\beta\gamma}$. Note that, since $a^{\alpha}=b^{\beta}$ in $K$, we have
\[ G=\pres{\mathcal{X}}{\mathcal{R}}=\pres{\mathcal{X}}{\mathcal{R}_{1},R(x_{i},x_{j}),x_{i}^{\gamma}}=\pres{\mathcal{X}}{\mathcal{R}_{1},R(x_{i},x_{j}),x_{i}^{\alpha}=x_{j}^{\beta},x_{i}^{\gamma}}. \]
Because $p\alpha\equiv 1~(\text{mod}~\gamma)$, there is an integer $q\in\mathbb{Z}$ such that $p\alpha+q\gamma=1$. Moreover, $p\alpha\equiv 1~(\text{mod}~\gamma)$ implies that $x_{i}=x_{i}^{p\alpha}=x_{j}^{p\beta}$ in $G$. This allows us to adjoin the relation $x_{i}=x_{j}^{p\beta}$ and to eliminate the generator $x_{i}$ as follows:
\begin{alignat*}{1}
G
&=\pres{\mathcal{X}}{\mathcal{R}_{1},R(x_{i},x_{j}),x_{i}^{\alpha}=x_{j}^{\beta},x_{i}^{\gamma},x_{i}=x_{j}^{p\beta}}\\
&=\pres{\mathcal{X}}{\mathcal{R}_{2},R(x_{j}^{p\beta},x_{j}),x_{j}^{p\alpha\beta}=x_{j}^{\beta},x_{j}^{p\beta\gamma},x_{i}=x_{j}^{p\beta}}\\
&=\pres{\mathcal{X}\smallsetminus\{x_{i}\}}{\mathcal{R}_{2},R(x_{j}^{p\beta},x_{j}),x_{j}^{p\alpha\beta}=x_{j}^{\beta},x_{j}^{p\beta\gamma}}\\
&=\pres{\mathcal{X}\smallsetminus\{x_{i}\}}{\mathcal{R}_{2},x_{j}^{p\alpha\beta}=x_{j}^{\beta},x_{j}^{p\beta\gamma}}\\
&=\pres{\mathcal{X}\smallsetminus\{x_{i}\}}{\mathcal{R}_{2},x_{j}^{(1-p\alpha)\beta},x_{j}^{p\beta\gamma}}\\
&=\pres{\mathcal{X}\smallsetminus\{x_{i}\}}{\mathcal{R}_{2},x_{j}^{q\beta\gamma},x_{j}^{p\beta\gamma}}.
\end{alignat*}
The last two relators can be subsumed to a single one of the form $x_{j}^{r}$, where $r$ is the greatest common divisor of $p\beta\gamma$ and $q\beta\gamma$. Recall that $p\alpha+q\gamma=1$, so $(p,q)=1$ and $r=(p\beta\gamma,q\beta\gamma)=|(p,q)\beta\gamma|=|\beta\gamma|$. Therefore,
\[ G=\pres{\mathcal{X}\smallsetminus\{x_{i}\}}{\mathcal{R}_{2},x_{j}^{\beta\gamma}}=\pres{\mathcal{X}\smallsetminus\{x_{i}\}}{\mathcal{S}}. \]
\end{proof}

Let us address one application of this lemma to presentations of the form $P_{\Lambda}(R)$. If there are relators $R(x_{i_{1}},x_{i_{2}}),R(x_{i_{2}},x_{i_{3}}),\ldots,R(x_{i_{n-1}},x_{i_{n}})\in\mathcal{R}$, say arising from a directed path in $\Lambda$, and $x_{i_{1}}^{\gamma}\in\mathcal{R}$ for some $\gamma\in\mathbb{Z}$ with $(\alpha,\gamma)=1$, then we can apply Lemma~\ref{lem:trackingargument}\,(a) to remove the generator $x_{i_{1}}$, under suitable modifications of the remaining relators. Since $(\alpha,\beta)=1$ and $(\alpha,\gamma)=1$, we have $(\alpha,\beta\gamma)=1$. So the presence of the adjoined relator $x_{i_{2}}^{\beta\gamma}$ allows us to apply Lemma~\ref{lem:trackingargument}\,(a) again to remove the generator $x_{i_{2}}$. Inductively, we can remove all the generators $x_{i_{1}},\ldots,x_{i_{n-1}}$.

\begin{remark}\label{rem:reflection}
We will occasionally make use of a \emph{reflection principle}: if $\Lambda$ is any digraph and $R(a,b)$ is any word, then we may consider the digraph $\Lambda'$ that is obtained from $\Lambda$ by reversing the direction of each arc and the word $R\hspace{1pt}'(a,b)$ that is obtained from $R(a,b)$ by interchanging $a$ and $b$ and further replacing every letter by its inverse so that also $\alpha$ and $\beta$ are interchanged (without any change of sign). Then, by definition, $G_{\Lambda}(R)\cong G_{\Lambda'}(R\hspace{1pt}')$.
\end{remark}

\subsection{Cases where \texorpdfstring{$|\alpha|\geq 2$}{|alpha|>=2} and \texorpdfstring{$|\beta|\geq 2$}{|beta|>=2}}
\label{subsec:proofcase1}

The proof of the following lemma uses the notion of killing. When we are given a group $G=\pres{\mathcal{X}}{\mathcal{R}}$ and \emph{kill} a generator $x\in\mathcal{X}$, we simply adjoin the relator $x$. The generator $x$ can then be removed by a Tietze transformation. It is clear that $G$ maps onto the resulting group. We note that, unlike all later lemmas, Lemma~\ref{lem:M2allowedbalancedgraphs} does not require the hypothesis that $a^{\alpha}=b^{\beta}$ in $K$, and so it can be applied independently of Corollary~\ref{cor:trianglefreeimpliesinfinite} and Proposition~\ref{prop:center}.

\begin{lemma}\label{lem:M2allowedbalancedgraphs}
Let $\Lambda$ and $R(a,b)$ be as in Theorem~\ref{mainthm:balanced}. Further suppose that $\Lambda$ is weakly connected and that $|\alpha|\geq 2$ and $|\beta|\geq 2$. If $G_{\Lambda}(R)$ is finite, then $(\alpha,\beta)=1$ and $\Lambda$ is one of the following digraphs:
\begin{center}
\begin{tabular}{l@{\hspace{10mm}}l@{\hspace{10mm}}l}
\brkt{a} $\Lambda(n)$, & \brkt{b} $\Lambda(n;\rarw{m})$ \brkt{$m\geq 1$}, & \brkt{c} $\Lambda(n;\larw{m})$ \brkt{$m\geq 1$}, \\
\brkt{d} $\Lambda(n,1)$, & \brkt{e} $\Lambda(n;\rarw{m},\larw{1})$ \brkt{$m\geq 1$}, & \brkt{f} $\Lambda(n;\larw{m},\rarw{1})$ \brkt{$m\geq 1$}.
\end{tabular}
\end{center}
\end{lemma}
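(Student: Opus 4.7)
My plan is to assume $G_\Lambda(R)$ is finite and derive both conclusions in turn. Since $G_\Lambda(R)$ is finite, Corollary~\ref{cor:trianglefreeimpliesinfinite} together with Proposition~\ref{prop:center} gives $a^\alpha = b^\beta$ in $K$, so Lemma~\ref{lem:trackingargument} is available throughout. First observe the global shape of $\Lambda$: weak connectedness together with $|V(\Lambda)| = |A(\Lambda)|$ forces the cyclomatic number of the underlying undirected graph to be exactly one, so $\Lambda$ consists of the unique undirected cycle of length $n$ (equal to the girth) with (possibly empty) trees attached at its cycle vertices. In particular $|V(\Lambda)| \geq n \geq 4$.

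To deduce $(\alpha,\beta) = 1$, I would invoke the Johnson--Wamsley--Wright theorem cited in Section~\ref{sec:Intro}: a finite group admitting a balanced presentation has abelianization of rank at most $3$. The presentation $P_\Lambda(R)$ is balanced. Suppose for contradiction that $d := \gcd(\alpha,\beta) \geq 2$. Because every relator $R(x_u, x_v)$ has both its exponent sums $\alpha$ and $-\beta$ divisible by $d$, the entries of the relation matrix lie in $d\mathbb{Z}$, so the map $x_v \mapsto \bar{x}_v \in \mathbb{Z}/d$ factors to a surjection $G_\Lambda(R)^{\mathrm{ab}} \twoheadrightarrow (\mathbb{Z}/d)^{V(\Lambda)}$. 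Since $d \geq 2$, the target group has rank exactly $|V(\Lambda)| \geq 4$, contradicting the Johnson--Wamsley--Wright bound. Hence $(\alpha,\beta) = 1$.

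For the shape of $\Lambda$, the main tool is the killing technique: adjoining the relator $x_v$ to $P_\Lambda(R)$ yields a quotient of $G_\Lambda(R)$ in which each $R(x_u, x_v)$ becomes $x_u^\alpha$ and each $R(x_v, x_w)$ becomes $x_w^\beta$. If $\Lambda$ does not match any of the six listed digraphs, then it displays one of the following forbidden features: a tree vertex of degree at least three, three or more tails on the cycle, a tail with a direction reversal, a cycle source-to-sink shortcut of length $d \geq 2$, or an incompatible combination of cycle perturbation and tails. For each forbidden feature I would kill a carefully chosen leaf in the offending branch and then apply Lemma~\ref{lem:trackingargument} repeatedly to eliminate the interior generators that have just been supplied with new power relators (the coprimality hypothesis $(\alpha,\gamma)=1$ or $(\beta,\gamma)=1$ is guaranteed by taking $\gamma \in \{\alpha,\beta\}$ together with the coprimality $(\alpha,\beta)=1$ derived above). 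The result is a quotient of $G_\Lambda(R)$ presented on a strictly smaller cycle-plus-trees digraph with $K$ unchanged. Iterating, one eventually arrives either at a pure cycle $\Lambda(n)$ presentation failing the finiteness conditions of Theorem~\ref{thm:pridecycpres} -- whose associated group is then infinite -- or at a presentation containing an obviously infinite subgroup via the Magnus Freiheitssatz applied to the preserved relator $R(a,b)$. Either way the quotient is infinite, contradicting finiteness of $G_\Lambda(R)$.

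The main obstacle is this third step: carrying out the case analysis exhaustively for every forbidden configuration, and verifying that the killing-and-tracking reductions always terminate in one of the two stated infiniteness verdicts. Particular care is needed when multiple forbidden features coexist, or when a cycle with source-sink structure interacts with long tails that are themselves improperly oriented.
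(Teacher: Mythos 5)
Your opening observation (weak connectedness plus $|V(\Lambda)|=|A(\Lambda)|$ forces a unicyclic underlying graph whose unique cycle has length $n\geq 4$) agrees with the paper, and your derivation of $(\alpha,\beta)=1$ is correct but takes a genuinely different route: you quotient the abelianization onto $(\Z/d)^{|V(\Lambda)|}$ with $d=(\alpha,\beta)$ and invoke the Johnson--Wamsley--Wright rank bound, whereas the paper kills all generators except two non-adjacent cycle vertices $u,w$ (these exist because $n\geq 4$) and adjoins $x_u^{(\alpha,\beta)}$, $x_w^{(\alpha,\beta)}$ to map $G_{\Lambda}(R)$ onto $\Z_{(\alpha,\beta)}\ast\Z_{(\alpha,\beta)}$, which is infinite when $(\alpha,\beta)\geq 2$. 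Both arguments work; the paper's is self-contained and elementary, while yours imports an external theorem but is shorter.

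The third step, however, contains a genuine gap. The paper's classification needs no tracking at all: it reuses the same killing device to show that a source and a sink that are not joined by an arc give a surjection onto $\Z_{|\alpha|}\ast\Z_{|\beta|}$, and that two sources (resp.\ two sinks) give a surjection onto $\Z_{|\alpha|}\ast\Z_{|\alpha|}$ (resp.\ $\Z_{|\beta|}\ast\Z_{|\beta|}$); since $|\alpha|,|\beta|\geq 2$ these are infinite, so $\Lambda$ has at most one source, at most one sink, and any source must be adjacent to any sink, after which a short count of sources, sinks, and leaf types yields exactly the six families. Your substitute --- enumerating ``forbidden features'' and eliminating each by killing a leaf and iterating Lemma~\ref{lem:trackingargument} --- is not carried out, its list of features is not shown to be exhaustive, and one of your two claimed endpoints cannot deliver a contradiction: under the hypotheses you have already installed \brkt{$(\alpha,\beta)=1$, $a^{\alpha}=b^{\beta}$ in $K$, $|\alpha|,|\beta|\geq 2$}, a reduction to the pure cycle $\Lambda(n)$ produces the \emph{finite} group $\Z_{|\alpha^{n}-\beta^{n}|}$ by Theorem~\ref{thm:pridecycpres}, so ``failing the finiteness conditions'' does not occur on that branch. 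Note also that you assume $a^{\alpha}=b^{\beta}$ in $K$ from the outset; the paper deliberately proves this lemma without that hypothesis so that it can be applied independently of Corollary~\ref{cor:trianglefreeimpliesinfinite} and Proposition~\ref{prop:center}, and none of the killing arguments actually needs it.
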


\begin{proof}
Suppose that $G_{\Lambda}(R)$ is finite. Because $\Lambda$ is a non-empty finite digraph with an equal number of vertices and arcs that is weakly connected, the underlying undirected graph of $\Lambda$ has precisely one cycle. By the hypothesis on the girth, this cycle must have length $n$ ($n\geq 4$), whence there exist two distinct vertices $u,w\in V(\Lambda)$ that are not connected by an arc. Consider the presentation $P_{\Lambda}(R)$. Killing all generators $x_v$ ($v \in V(\Lambda)\smallsetminus \{u,w\}$) yields a presentation $\pres{x_u,x_w}{\mathcal{R}}$ where
\[ \mathcal{R}\subseteq\sset{R(x_u,1),R(1,x_u),R(x_w,1),R(1,x_w)}=\sset{x_u^\alpha,x_u^\beta,x_w^\alpha,x_w^\beta}. \]
Further adjoining the relators $x_u^{(\alpha,\beta)}$ and $x_w^{(\alpha,\beta)}$ gives that $G_{\Lambda}(R)$ maps onto
\[ \pres{x_u,x_w}{\mathcal{R},x_u^{(\alpha,\beta)},x_w^{(\alpha,\beta)}}=\pres{x_u,x_w}{x_u^{(\alpha,\beta)},x_w^{(\alpha,\beta)}}\cong\Z_{(\alpha,\beta)}\ast\Z_{(\alpha,\beta)}, \]
which is infinite if $(\alpha,\beta)\neq 1$. Since $G_{\Lambda}(R)$ is finite, we have $(\alpha,\beta)=1$. This is the first statement of the lemma. For the second one we make two observations of similar flavour.

Suppose that there are a source $u\in V(\Lambda)$ and a sink $w\in V(\Lambda)$ that are not connected by an arc. Killing all generators $x_v$ ($v \in V(\Lambda)\smallsetminus \{u,w\}$) gives that $G_{\Lambda}(R)$ maps onto $\Z_{|\alpha|}\ast\Z_{|\beta|}$, which is infinite since $|\alpha|\geq 2$ and $|\beta|\geq 2$. Thus, we can assume that there is an arc between every source and every sink. Next, suppose that there are distinct vertices $u,w\in V(\Lambda)$ that are both sources (resp.\ both sinks). Clearly, these vertices cannot be connected by an arc. Killing all generators $x_v$ ($v \in V(\Lambda)\smallsetminus \{u,w\}$) gives that $G_{\Lambda}(R)$ maps onto $\Z_{|\alpha|}\ast\Z_{|\alpha|}$ (resp.\ $\Z_{|\beta|}\ast\Z_{|\beta|}$), which is infinite. Thus, we can assume that $\Lambda$ has at most one source and at most one sink. With these two restrictions in mind we can investigate the possible digraphs $\Lambda$. Let $\sigma$ and $\tau$ be the numbers of sources and sinks, respectively. Moreover, let $\sigma_1$ be the number of source leaves and let $\tau_1$ be the number of sink leaves. Then $0\leq \sigma_1\leq \sigma\leq 1$ and $0\leq \tau_1\leq \tau\leq 1$.

If $\sigma=\tau=0$, then $\sigma_1=\tau_1=0$ and $\Lambda$ is the directed $n$-cycle, \ie the digraph $\Lambda(n)$ of Figure~\ref{fig:digraphs}. If $\sigma=0$ and $\tau=1$, then $\sigma_1=0$ and $\tau_1\in\{0,1\}$. If $\tau_1=0$, then the underlying undirected graph of $\Lambda$ is a cycle, which is impossible because $\Lambda$ has fewer sources than sinks. Therefore, $\tau_1=1$ and $\Lambda=\Lambda(n;\rarw{m})$ ($m\geq 1$). In a similar way, if $\sigma=1$ and $\tau=0$, then we have $\Lambda=\Lambda(n;\larw{m})$ ($m\geq 1$).

Suppose then that $\sigma=\tau=1$ and so either $\sigma_1=\tau_1=0$ or $\sigma_1=1$, $\tau_1=0$ or $\sigma_1=0$, $\tau_1=1$. The case $\sigma_1=\tau_1=1$ cannot occur for otherwise the restriction that there is an arc between every source and every sink implies that $\Lambda$ is the digraph consisting of two vertices and one arc between them, and thus has more vertices than arcs. The restriction further yields that if $\sigma_1=\tau_1=0$, then $\Lambda=\Lambda(n,1)$. In a similar way, if $\sigma_1=0$, $\tau_1=1$ or $\sigma_1=1$, $\tau_1=0$, we have $\Lambda=\Lambda(n;\larw{m},\rarw{1})$ ($m\geq 1$) or $\Lambda=\Lambda(n;\rarw{m},\larw{1})$ ($m\geq 1$), respectively.
\end{proof}

\begin{excursion}
The statement of Lemma~\ref{lem:M2allowedbalancedgraphs} remains valid when we replace the hypothesis ``$n\geq 4$'' by ``$n\geq 3$ and the underlying undirected graph of $\Lambda$ is not a triangle''. However, for triangles the conclusion that $(\alpha,\beta)=1$ does not hold; the group $J(2,2,2)$ given in the introduction is finite and serves as a counterexample.

Despite this we cannot make the same replacement of hypotheses in Corollary~\ref{maincor:rank<3}. If we did, then the trivial group could arise. For example, let $\Lambda=\Lambda(3;\rarw{1})$ and $R(a,b)=a^{-1}bab^{-2}$. In $G_{\Lambda}(R)$ the three generators corresponding to the vertices of the directed $3$-cycle are trivial by \cite{Higman}, as mentioned in the introduction, which implies that the fourth generator must be trivial, too. It would be interesting to see whether all finite groups $G_{\Lambda}(R)$ arising when $n=3$ and the underlying undirected graph of $\Lambda$ is not a triangle satisfy $\mathrm{rank}(G_{\Lambda}(R))\in\sset{0,1,2}$.
\end{excursion}

We shall now consider the group $G_{\Lambda}(R)$, and in some cases also the abelianization $G_{\Lambda}(R)^{\mathrm{ab}}$, for each of the digraphs $\Lambda$ in Lemma~\ref{lem:M2allowedbalancedgraphs}\,(a)--(f) under its conclusion that $(\alpha,\beta)=1$ and the further hypothesis that $a^\alpha=b^\beta$ in $K$. The following lemma was stated without proof in~\cite[page~248]{Pride} and the omitted argument was given in~\cite[Lemma~3.4]{BogleyWilliams}. Nevertheless, we include a different proof here as a showcase for the methods used in this paper and for later reference.

\begin{lemma}[{\cite[page~248]{Pride}, \cite[Lemma~3.4]{BogleyWilliams}}]\label{lem:M1Lambda0000}
Let $R(a,b)$ be as in Theorem~\ref{mainthm:balanced}. Further suppose that $(\alpha,\beta)=1$ and $a^\alpha=b^\beta$ in $K$. If $\Lambda=\Lambda(n)$ \brkt{$n\geq 2$}, then $G_{\Lambda}(R)\cong\Z_{|\alpha^n-\beta^n|}$.
\end{lemma}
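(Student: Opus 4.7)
The plan is to reduce the cyclic presentation
\[ P_{\Lambda}(R) = \pres{x_1,\ldots,x_n}{R(x_1,x_2),\ R(x_2,x_3),\ \ldots,\ R(x_n,x_1)} \]
to a one-generator cyclic presentation by applying Lemma~\ref{lem:trackingargument}\,(a) successively along the directed cycle. The hypothesis $a^\alpha = b^\beta$ in $K$ means that $x_i^\alpha = x_{i+1}^\beta$ (indices modulo $n$) is a consequence of $R(x_i,x_{i+1})$ in $G_{\Lambda}(R)$, and chaining these consequences around the cycle gives
\[ x_1^{\alpha^n} = x_2^{\alpha^{n-1}\beta} = x_3^{\alpha^{n-2}\beta^2} = \cdots = x_n^{\alpha\beta^{n-1}} = x_1^{\beta^n}, \]
so the relator $x_1^\gamma$ with $\gamma := \alpha^n - \beta^n$ is a consequence of $P_{\Lambda}(R)$ and may be adjoined. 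I shall assume $\gamma \neq 0$, which is the substantive case; if $\gamma = 0$, then $(\alpha,\beta)=1$ forces $|\alpha|=|\beta|=1$ and the degenerate situation is dealt with directly.

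With $x_1^\gamma$ available and $(\alpha,\gamma) = (\alpha,\beta^n) = 1$, Lemma~\ref{lem:trackingargument}\,(a) applied to $R(x_1,x_2)$ removes $x_1$ (substituting $x_1 \mapsto x_2^{p_1\beta}$, where $p_1\alpha \equiv 1 \pmod{\gamma}$) and adjoins $x_2^{\beta\gamma}$. Since $(\alpha,\beta\gamma) = 1$ as well, the lemma applied to $R(x_2,x_3)$ removes $x_2$ and adjoins $x_3^{\beta^2\gamma}$, and iterating this for $n-1$ steps eliminates $x_1, \ldots, x_{n-1}$. The only other relator affected is $R(x_n,x_1)$, which accumulates the substitutions and becomes $R(x_n,\,x_n^{P\beta^{n-1}})$ where $P := p_1p_2\cdots p_{n-1}$. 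Because this is a word in a free group of rank one, it literally equals $x_n^{\alpha - P\beta^n}$, and the reduced presentation reads $\pres{x_n}{x_n^{\alpha - P\beta^n},\ x_n^{\beta^{n-1}\gamma}}$, cyclic of order $d := \gcd(\alpha - P\beta^n,\ \beta^{n-1}\gamma)$.

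The remaining task is to show $d = |\gamma|$. Each $p_k$ satisfies $p_k\alpha \equiv 1 \pmod{\beta^{k-1}\gamma}$, hence also modulo $\gamma$; multiplying these congruences gives $P\alpha^{n-1} \equiv 1 \pmod{\gamma}$, and combined with $\beta^n \equiv \alpha^n \pmod{\gamma}$ this yields $P\beta^n \equiv \alpha \pmod{\gamma}$. Thus $\gamma$ divides $\alpha - P\beta^n$ and we may set $m := (\alpha - P\beta^n)/\gamma$. The delicate part, and the only real obstacle in this proof, is to show $\gcd(m,\beta) = 1$: writing $P\alpha^{n-1} = 1 + r\gamma$ gives $P\beta^n = \alpha + (r\alpha - P)\gamma$ and hence $m = P - r\alpha$, and reducing modulo $\beta$ (using $(\alpha,\beta) = 1$ so that $\alpha$ is a unit modulo $\beta$) one finds $m \equiv \alpha^{-(n-1)} \pmod{\beta}$, which is coprime to $\beta$. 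Hence $\gcd(m,\beta^{n-1}) = 1$ and $d = |\gamma|\cdot\gcd(m,\beta^{n-1}) = |\gamma|$, giving $G_{\Lambda}(R) \cong \Z_{|\alpha^n-\beta^n|}$.
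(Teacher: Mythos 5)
Your proof is correct and follows essentially the same route as the paper's: adjoin $x_1^{\gamma}$, eliminate $x_1,\ldots,x_{n-1}$ by iterating Lemma~\ref{lem:trackingargument}\,(a), and then verify that the resulting two power relators have greatest common divisor $|\gamma|$. The only (harmless) differences are that the paper fixes a single $p$ with $p\alpha\equiv 1\ (\text{mod}~\beta^{n-2}\gamma)$ once and for all instead of your step-dependent $p_k$, and it disposes of the factor $\beta^{n-1}$ in the final gcd immediately via $(\beta,\alpha-P\beta^{n})=(\beta,\alpha)=1$ rather than through your computation of $m$ modulo $\beta$.
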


\begin{proof}
Let $V(\Lambda) = \{1,\ldots,n\}$ and $A(\Lambda) = \{ (1,2),(2,3),\ldots, (n,1)\}$, whence the group $G_{\Lambda}(R)$ is defined by the presentation
\[ \pres{x_{1},\ldots,x_{n}}{R(x_{1},x_{2}),R(x_{2},x_{3}),\ldots,R(x_{n},x_{1})}. \]
Because $a^\alpha=b^\beta$ in $K$, we know that whenever the relator $R(x_i,x_j)$ appears in the above list, the respective equation $x_{i}^\alpha=x_{j}^\beta$ holds in $G_{\Lambda}(R)$. Therefore,
\[ x_{1}^{\alpha^{n}}=x_{2}^{\alpha^{n-1}\beta}=x_{3}^{\alpha^{n-2}\beta^{2}}=\ldots=x_{n}^{\alpha\beta^{n-1}}=x_{1}^{\beta^{n}}. \]
We set $\gamma=\alpha^{n}-\beta^{n}$ and obtain $x_{1}^{\gamma}=1$ in $G_{\Lambda}(R)$. Adjoining the relator $x_{1}^{\gamma}$ yields
\[ G_{\Lambda}(R)=\pres{x_{1},\ldots,x_{n}}{x_{1}^{\gamma},R(x_{1},x_{2}),R(x_{2},x_{3}),\ldots,R(x_{n},x_{1})}. \]
Since $(\alpha,\beta)=1$, also $(\alpha,\gamma)=1$. We can thus apply Lemma~\ref{lem:trackingargument}\,(a) to simplify the presentation. Before doing so let us observe that also $(\alpha,\beta\gamma)=\ldots=(\alpha,\beta^{n-2}\gamma)=1$. By choosing an integer $p\in\mathbb{Z}$ such that $p\alpha\equiv 1~(\text{mod}~\beta^{n-2}\gamma)$, the congruence $p\alpha\equiv 1$ simultaneously holds modulo $\gamma$, $\beta\gamma$, \ldots, $\beta^{n-2}\gamma$. Now, an iterated application of Lemma~\ref{lem:trackingargument}\,(a) yields
\begin{alignat*}{1}
G_{\Lambda}(R)
&=\pres{x_{1},\ldots,x_{n}}{x_{1}^{\gamma},R(x_{1},x_{2}),R(x_{2},x_{3}),R(x_{3},x_{4}),\ldots,R(x_{n},x_{1})}\\
&=\pres{x_{2},\ldots,x_{n}}{x_{2}^{\beta\gamma},R(x_{2},x_{3}),R(x_{3},x_{4}),\ldots,R(x_{n},x_{2}^{p\beta})}\\
&=\pres{x_{3},\ldots,x_{n}}{x_{3}^{\beta^{2}\gamma},R(x_{3},x_{4}),\ldots,R(x_{n},x_{3}^{p^{2}\beta^{2}})}\\
&=\ldots=\pres{x_{n}}{x_{n}^{\beta^{n-1}\gamma},R(x_{n},x_{n}^{p^{n-1}\beta^{n-1}})}\\
&=\pres{x_{n}}{x_{n}^{\beta^{n-1}\gamma},x_{n}^{\alpha-\beta(p^{n-1}\beta^{n-1})}}.
\end{alignat*}
Therefore, $G_{\Lambda}(R)\cong\mathbb{Z}_{r}$, where
\[ r=(\beta^{n-1}\gamma,\alpha-\beta(p^{n-1}\beta^{n-1}))=(\gamma,\alpha-p^{n-1}\beta^{n}). \]
In order to evaluate the rightmost term, observe that
\[ p^{n-1}\beta^{n}\equiv (p\alpha)p^{n-1}\beta^{n}=\alpha p^{n}\beta^{n}=\alpha p^{n}(\alpha^{n}-\gamma)\equiv \alpha p^{n}\alpha^{n}=\alpha(p\alpha)^{n}\equiv \alpha 1^{n}=\alpha~(\text{mod}~\gamma). \]
So $\gamma$ divides $\alpha-p^{n-1}\beta^{n}$, which implies that $r=|\gamma|=|\alpha^n-\beta^n|$, as required.
\end{proof}

\begin{lemma}\label{lem:M1Lambda1010or0101}
Let $R(a,b)$ be as in Theorem~\ref{mainthm:balanced}. Further suppose that $(\alpha,\beta)=1$ and $a^\alpha=b^\beta$ in~$K$. Then the following hold:
\begin{itemize}
\item[\brkt{a}] If $\Lambda=\Lambda(n;\rarw{m})$ \brkt{$n\geq 2$, $m\geq 1$}, then $G_{\Lambda}(R)\cong\Z_{|\beta^m(\alpha^n-\beta^n)|}$.
\item[\brkt{b}] If $\Lambda=\Lambda(n;\larw{m})$ \brkt{$n\geq 2$, $m\geq 1$}, then $G_{\Lambda}(R)\cong\Z_{|\alpha^m(\alpha^n-\beta^n)|}$.
\end{itemize}
\end{lemma}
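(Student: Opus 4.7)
The plan is to prove (a) by induction on $m$, with Lemma~\ref{lem:M1Lambda0000} supplying the base case and Lemma~\ref{lem:trackingargument}(a) supplying the inductive step; part (b) then follows from (a) by the reflection principle of Remark~\ref{rem:reflection}. Label the cycle of $\Lambda(n; \rarw{m})$ as $x_1 \to x_2 \to \cdots \to x_n \to x_1$ and the outward tail as $x_1 \to y_1 \to \cdots \to y_m$, set $\gamma = \alpha^n - \beta^n$, and adopt the convention $y_0 := x_1$. I aim to prove by induction on $m \geq 0$ the strengthened statement that $P_{\Lambda(n; \rarw{m})}(R)$ can be Tietze-reduced to the one-relator presentation $\pres{y_m}{y_m^{|\beta^m \gamma|}}$; this recovers (a) for $m \geq 1$ and matches Lemma~\ref{lem:M1Lambda0000} for $m = 0$.

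The base case $m = 0$ is Lemma~\ref{lem:M1Lambda0000}: by the cyclic symmetry of $\Lambda(n)$, the elimination order in that lemma's proof may be arranged so that $x_1$ is the surviving generator, yielding $\pres{x_1}{x_1^{|\gamma|}}$. For the inductive step, $P_{\Lambda(n; \rarw{m})}(R)$ is obtained from $P_{\Lambda(n; \rarw{m-1})}(R)$ by adjoining the single generator $y_m$ and the single relator $R(y_{m-1}, y_m)$. This new relator involves only $y_{m-1}$ and $y_m$, neither of which is touched by the inductive Tietze reduction of the subpresentation $P_{\Lambda(n; \rarw{m-1})}(R)$. Applying that reduction therefore leaves
\[ \pres{y_{m-1}, y_m}{y_{m-1}^{|\beta^{m-1}\gamma|},\, R(y_{m-1}, y_m)}. \]
Because $(\alpha, \beta) = (\alpha, \gamma) = 1$ forces $(\alpha, \beta^{m-1}\gamma) = 1$, Lemma~\ref{lem:trackingargument}(a) applies with $x_i = y_{m-1}$ and $x_j = y_m$: it removes $y_{m-1}$ together with its power relator, produces no further substitutions (since $y_{m-1}$ appears nowhere else), and adjoins $y_m^{|\beta^m \gamma|}$, yielding $\pres{y_m}{y_m^{|\beta^m \gamma|}} \cong \Z_{|\beta^m \gamma|}$.

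For (b), reversing every arc of $\Lambda(n; \larw{m})$ (with appropriate relabeling) produces $\Lambda(n; \rarw{m})$, so Remark~\ref{rem:reflection} identifies $G_{\Lambda(n; \larw{m})}(R)$ with $G_{\Lambda(n; \rarw{m})}(R')$ for a cyclically reduced word $R'$ whose exponent sums in $a$ and $b$ are $\beta$ and $-\alpha$. Part (a) then gives $\Z_{|\alpha^m(\beta^n - \alpha^n)|} = \Z_{|\alpha^m(\alpha^n - \beta^n)|}$. The only point that requires real care is the base-case observation that $x_1$ may be chosen as the surviving generator in the reduction of Lemma~\ref{lem:M1Lambda0000}; once that is granted, the remainder is a clean induction driven by a single application of the tracking lemma at each step.
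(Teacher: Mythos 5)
Your proposal is correct and follows essentially the same route as the paper: reduce the cycle to a single power relator at the attachment vertex (via the transformations of Lemma~\ref{lem:M1Lambda0000}), then eliminate the tail vertices one at a time with Lemma~\ref{lem:trackingargument}\,(a), and deduce (b) from (a) by the reflection principle. Your packaging of the tail elimination as an induction on $m$, and your explicit flagging that the surviving generator of the cycle reduction must be the attachment vertex and that the extra relator $R(y_{m-1},y_m)$ passes through the sub-reduction untouched, are only cosmetic differences from the paper's direct iterated application.
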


\begin{proof}
We prove Part~(a) only. Part~(b) can either be proved similarly or it can be deduced from Part\,(a) using the reflection principle addressed in Remark~\ref{rem:reflection}. The group $G_{\Lambda}(R)$ is defined by the presentation
\[ \hugepres{
\begin{array}{l}
x_1,\ldots,x_n,\\
y_1,\ldots, y_m
\end{array}
}{
\begin{array}{l}
R(x_1,x_2),R(x_2,x_3),\ldots,R(x_n,x_1), \\
R(x_n,y_1),R(y_1,y_2),\ldots,R(y_{m-1},y_m)
\end{array}
}. \]
We set $\gamma=\alpha^n-\beta^n$ and apply precisely the same transformations as in the proof of Lemma~\ref{lem:M1Lambda0000}. The fact that there are further generators and relators does not affect the validity of the transformations. What remains is
\[ G_{\Lambda}(R)=\pres{x_n,y_1,\ldots,y_m}{x_n^{\gamma},R(x_n,y_1),R(y_1,y_2),\ldots,R(y_{m-1},y_m)}. \]
We continue simplifying this presentation. Choose an integer $p\in\mathbb{Z}$ such that $p\alpha\equiv 1~(\text{mod}~\beta^{m-1}\gamma)$. Hence, the congruence $p\alpha\equiv 1$ simultaneously holds modulo $\gamma$, $\beta\gamma$, \ldots, $\beta^{m-1}\gamma$. Now, an iterated application of Lemma~\ref{lem:trackingargument}\,(a) yields
\begin{alignat*}{1}
G_{\Lambda}(R)
&=\pres{x_n,y_1,\ldots, y_m}{x_n^{\gamma},R(x_n,y_1),R(y_1,y_2),R(y_2,y_3),\ldots,R(y_{m-1},y_m)} \\
&=\pres{y_1,\ldots,y_m}{y_1^{\beta\gamma},R(y_1,y_2),R(y_2,y_3),\ldots,R(y_{m-1},y_m)} \\
&=\pres{y_2,\ldots,y_m}{y_2^{\beta^{2}\gamma},R(y_2,y_3),\ldots,R(y_{m-1},y_m)} \\
&=\ldots=\pres{y_{m-1}, y_m}{y_{m-1}^{\beta^{m-1}\gamma},R(y_{m-1},y_m)}=\pres{y_m}{y_{m}^{\beta^{m}\gamma}}.
\end{alignat*}
Therefore, $G_{\Lambda}(R)\cong\Z_{|\beta^m\gamma|}=\Z_{|\beta^m(\alpha^n-\beta^n)|}$.
\end{proof}

\begin{lemma}\label{lem:M1Lambda1100}
Let $R(a,b)$ be as in Theorem~\ref{mainthm:balanced}. Further suppose that $(\alpha,\beta)=1$ and $a^\alpha=b^\beta$ in~$K$. If $\Lambda=\Lambda(n,1)$ \brkt{$n\geq 3$}, then
\[ G_{\Lambda}(R)\cong K/\ngpres{a^{\alpha(\alpha^{n-2}-\beta^{n-2})}}^K\quad\text{and}\quad G_{\Lambda}(R)^{\mathrm{ab}}\cong\Z_{|\alpha\beta(\alpha^{n-2}-\beta^{n-2})|}. \]
If, in addition, $|\alpha|\neq 1$, $|\beta|\neq 1$, and $G_\Lambda(R)$ is finite, then $\delta_a=\delta_b=1$.
\end{lemma}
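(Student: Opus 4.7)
The plan is to label $\Lambda(n,1)$ with source $x_0$, sink $x_{n-1}$, and long directed path $x_0\to x_1\to\cdots\to x_{n-1}$, so that the chord is the arc $(x_0,x_{n-1})$. Set $\gamma=\alpha^{n-2}-\beta^{n-2}$; since $(\alpha,\beta)=1$ one reads off $(\alpha,\gamma)=(\beta,\gamma)=1$. Because $a^\alpha=b^\beta$ in $K$, every arc $(u,v)$ yields $x_u^\alpha=x_v^\beta$ in $G_\Lambda(R)$; iterating along the long path gives $x_0^{\alpha^{n-1}}=x_{n-1}^{\beta^{n-1}}$, while raising the chord relation $x_0^\alpha=x_{n-1}^\beta$ to the $\alpha^{n-2}$ supplies the second expression $x_0^{\alpha^{n-1}}=x_{n-1}^{\beta\alpha^{n-2}}$. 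Comparing the two yields $x_{n-1}^{\beta\gamma}=1$, and symmetrically $x_0^{\alpha\gamma}=1$. For every intermediate $1\le i\le n-2$ one similarly derives both $x_i^{\beta^i\gamma}=1$ (from $x_i^{\beta^i}=x_0^{\alpha^i}$) and $x_i^{\alpha^{n-1-i}\gamma}=1$ (from $x_i^{\alpha^{n-1-i}}=x_{n-1}^{\beta^{n-1-i}}$), and since $\gcd(\beta^i,\alpha^{n-1-i})=1$ these upgrade to the cleaner $x_i^\gamma=1$.

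Next I would show $G_\Lambda(R)$ is generated by $x_0$ and $x_{n-1}$. For each $1\le i\le n-2$ choose $q_i\in\Z$ with $q_i\beta^i\equiv 1\pmod\gamma$ (possible since $(\beta,\gamma)=1$); then $x_i^\gamma=1$ together with $x_i^{\beta^i}=x_0^{\alpha^i}$ forces $x_i=x_i^{q_i\beta^i}=x_0^{q_i\alpha^i}$. With these formulas in hand I would build mutually inverse maps between $G_\Lambda(R)$ and $K/\ngpres{a^{\alpha\gamma}}^K$: the homomorphism $\psi$ sending $a\mapsto x_0,\ b\mapsto x_{n-1}$ is well defined by the chord relation and $x_0^{\alpha\gamma}=1$; the candidate inverse $\phi$ sends $x_0\mapsto a$, $x_{n-1}\mapsto b$, $x_i\mapsto a^{q_i\alpha^i}$. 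Checking that $\phi$ respects each relator of $P_\Lambda(R)$, the chord is immediate, and on an interior chain relator $R(x_{i-1},x_i)$ with $1\le i\le n-2$ the two images are commuting powers of $a$, so the relator collapses to $a^{\alpha^i(q_{i-1}-q_i\beta)}$ and the congruence $q_{i-1}\equiv q_i\beta\pmod\gamma$ (with $q_0:=1$) makes the exponent a multiple of $\alpha\gamma$. That $\phi$ and $\psi$ are inverse on generators is then clear from the formula $x_i=x_0^{q_i\alpha^i}$.

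The delicate step is the boundary relator $R(x_{n-2},x_{n-1})$, the only relator that sits at the junction of the long path and the chord and whose images under $\phi$ do not obviously lie in one cyclic subgroup. I would first use $a^\alpha=b^\beta$ to rewrite $a^{q_{n-2}\alpha^{n-2}}=b^{q_{n-2}\alpha^{n-3}\beta}$ in $K$, after which both arguments of $R$ are powers of $b$ and commute. The relator then collapses to a power of $b$ whose exponent is a multiple of $\beta\gamma$ exactly because $\alpha^{n-2}\equiv\beta^{n-2}\pmod\gamma$ (the defining congruence of $\gamma$), and $b^{\beta\gamma}=a^{\alpha\gamma}=1$ in the quotient. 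This is the one calculation I expect to require real care, and the main obstacle to a clean write-up. Together with the previous paragraph this establishes $G_\Lambda(R)\cong K/\ngpres{a^{\alpha\gamma}}^K$.

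For the abelianization I would abelianize the presentation $\pres{a,b}{R(a,b),a^{\alpha\gamma}}$: the relation matrix with rows $(\alpha,-\beta)$ and $(\alpha\gamma,0)$ has determinant $\alpha\beta\gamma$, and its entries have greatest common divisor $(\alpha,\beta)=1$, so the Smith normal form is $\mathrm{diag}(1,|\alpha\beta\gamma|)$ and $G_\Lambda(R)^{\mathrm{ab}}\cong\Z_{|\alpha\beta(\alpha^{n-2}-\beta^{n-2})|}$. For the closing clause, since $\delta_a\mid\alpha\mid\alpha\gamma$, the quotient $K/\ngpres{a^{\delta_a}}^K$ is a further quotient of $G_\Lambda(R)$; in it every $a^{\alpha_j}$ is trivial, collapsing $R(a,b)$ to $b^{-\beta}$, so the quotient is $\Z_{\delta_a}\ast\Z_{|\beta|}$, which is infinite as soon as $\delta_a,|\beta|\ge 2$. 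Since $|\beta|\ge 2$ is assumed and $G_\Lambda(R)$ is finite, this forces $\delta_a=1$, and the symmetric argument using $\ngpres{b^{\delta_b}}^K$ forces $\delta_b=1$.
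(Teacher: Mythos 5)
Your argument is correct, and every step checks out: the power relations $x_{n-1}^{\beta\gamma}=1$, $x_0^{\alpha\gamma}=1$, and $x_i^{\gamma}=1$ (via $\gcd(\beta^i,\alpha^{n-1-i})=1$) are all valid, the interior relators do collapse to $a^{\alpha^i(q_{i-1}-q_i\beta)}$ with $q_{i-1}\equiv q_i\beta\pmod{\gamma}$, and your treatment of the boundary relator $R(x_{n-2},x_{n-1})$ -- rewriting $a^{q_{n-2}\alpha^{n-2}}$ as $b^{q_{n-2}\alpha^{n-3}\beta}$ and invoking $\alpha^{n-2}\equiv\beta^{n-2}\pmod{\gamma}$ -- is exactly the computation needed. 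However, your route differs from the paper's. The paper derives only the single relation $x_1^\gamma=1$ and then invokes its Lemma~\ref{lem:trackingargument}\,(a) iteratively: each intermediate generator is eliminated by a Tietze transformation that replaces it by a power of its successor and carries along a power relator $x_i^{\beta^{i-1}\gamma}$, ending in the two-generator, three-relator presentation $\pres{a,b}{b^{\beta^{n-2}\gamma},R(a,b),R(a,b^{p^{n-2}\beta^{n-2}})}$, which a gcd computation reduces to $\pres{a,b}{R(a,b),a^{\alpha\gamma}}$. You instead bypass Lemma~\ref{lem:trackingargument} entirely, establish $x_i^\gamma=1$ for every intermediate vertex by exploiting both directed paths through it, and then build explicit mutually inverse homomorphisms with $K/\ngpres{a^{\alpha\gamma}}^K$, verifying the relators one by one. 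The paper's approach buys uniformity -- the same elimination lemma drives Lemmas~\ref{lem:M1Lambda0000}--\ref{lem:M1Lambda1110or1101} -- and concentrates all the number theory in one final gcd; yours is self-contained and makes the isomorphism completely explicit, at the price of the delicate boundary-relator check you correctly flag. Your abelianization computation (Smith normal form of the matrix with rows $(\alpha,-\beta)$ and $(\alpha\gamma,0)$) and your derivation of $\delta_a=\delta_b=1$ from the infinite free-product quotients $\Z_{\delta_a}\ast\Z_{|\beta|}$ and $\Z_{|\alpha|}\ast\Z_{\delta_b}$ coincide with the paper's.
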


\begin{proof}
Let $V(\Lambda) = \{1,\ldots,n\}$. Without loss of generality let $n$ be the source and $n-1$ be the sink so that the group $G_{\Lambda}(R)$ is defined by the presentation
\[ \pres{x_{1},\ldots,x_{n}}{R(x_{1},x_{2}),R(x_{2},x_{3}),\ldots,R(x_{n-2},x_{n-1}),R(x_{n},x_{n-1}),R(x_{n},x_{1})}. \]
Because $a^\alpha=b^\beta$ in $K$, we know that whenever the relator $R(x_i,x_j)$ appears in the above list, the respective equation $x_{i}^\alpha=x_{j}^\beta$ holds in $G_{\Lambda}(R)$. Therefore,
\[ x_{1}^{\alpha^{n-2}}=x_{2}^{\alpha^{n-3}\beta}=x_{3}^{\alpha^{n-4}\beta^{2}}=\ldots=x_{n-2}^{\alpha\beta^{n-3}}=x_{n-1}^{\beta^{n-2}}=x_{n}^{\alpha\beta^{n-3}}=x_{1}^{\beta^{n-2}}. \]
We set $\gamma=\alpha^{n-2}-\beta^{n-2}$ and obtain $x_{1}^{\gamma}=1$ in $G_{\Lambda}(R)$. Adjoining this relator yields
\[ G_{\Lambda}(R)=\pres{x_{1},\ldots,x_{n}}{x_{1}^{\gamma},R(x_{1},x_{2}),R(x_{2},x_{3}),\ldots,R(x_{n-2},x_{n-1}),R(x_{n},x_{n-1}),R(x_{n},x_{1})}. \]
As in the previous proofs, choose an integer $p\in\mathbb{Z}$ such that $p\alpha\equiv 1~(\text{mod}~\beta^{n-3}\gamma)$. Hence, the congruence $p\alpha\equiv 1$ simultaneously holds modulo $\gamma$, $\beta\gamma$, \ldots, $\beta^{n-3}\gamma$. Now, an iterated application of Lemma~\ref{lem:trackingargument}\,(a) yields
\begin{alignat*}{1}
G_{\Lambda}(R)
&=\pres{x_{1},\ldots,x_{n}}{x_{1}^{\gamma},R(x_{1},x_{2}),R(x_{2},x_{3}),R(x_{3},x_{4}),\ldots,R(x_{n-2},x_{n-1}),R(x_{n},x_{n-1}),R(x_{n},x_{1})}\\
&=\pres{x_{2},\ldots,x_{n}}{x_{2}^{\beta\gamma},R(x_{2},x_{3}),R(x_{3},x_{4}),\ldots,R(x_{n-2},x_{n-1}),R(x_{n},x_{n-1}),R(x_{n},x_{2}^{p\beta})}\\
&=\pres{x_{3},\ldots,x_{n}}{x_{3}^{\beta^{2}\gamma},R(x_{3},x_{4}),\ldots,R(x_{n-2},x_{n-1}),R(x_{n},x_{n-1}),R(x_{n},x_{3}^{p^{2}\beta^{2}})}\\
&=\ldots=\pres{x_{n-1},x_{n}}{x_{n-1}^{\beta^{n-2}\gamma},R(x_{n},x_{n-1}),R(x_{n},x_{n-1}^{p^{n-2}\beta^{n-2}})}.
\end{alignat*}
We set $a=x_{n}$ and $b=x_{n-1}$ to obtain
\[ G_{\Lambda}(R)=\pres{a,b}{b^{\beta^{n-2}\gamma},R(a,b),R(a,b^{p^{n-2}\beta^{n-2}})}. \]
Since $a^\alpha=b^\beta$ in $K$, we can replace each occurrence of $b^\beta$ in the first and third relator by $a^\alpha$. Afterwards, we simplify the third relator, which has become a word in the generator $a$. This yields
\begin{alignat*}{1}
G_{\Lambda}(R)
&=\pres{a,b}{a^{\alpha\beta^{n-3}\gamma},R(a,b),a^{\alpha-\beta(p^{n-2}\alpha\beta^{n-3})}}.
\end{alignat*}
The first and third relator can be subsumed to a single one of the form $a^{r}$, where $r$ is the greatest common divisor of $\alpha\beta^{n-3}\gamma$ and $\alpha-\beta(p^{n-2}\alpha\beta^{n-3})$. That is,
\[ r=(\alpha\beta^{n-3}\gamma,\alpha-\beta(p^{n-2}\alpha\beta^{n-3}))=|\alpha(\gamma,1-p^{n-2}\beta^{n-2})|. \]
In order to evaluate the rightmost term, observe that
\[ p^{n-2}\beta^{n-2}=p^{n-2}(\alpha^{n-2}-\gamma)\equiv p^{n-2}\alpha^{n-2}=(p\alpha)^{n-2}\equiv 1^{n-2}=1~(\text{mod}~\gamma). \]
So $\gamma$ divides $1-p^{n-2}\beta^{n-2}$. Therefore, $r=|\alpha\gamma|=|\alpha(\alpha^{n-2}-\beta^{n-2})|$, which proves the first conclusion. Thus, the abelianization $G_{\Lambda}(R)^{\text{ab}}$ is given by the presentation $\pres{a,b}{R(a,b),a^{\alpha\gamma}}^{\text{ab}}$, whose relation matrix is
\[ A=\left( \begin{array}{cc} \alpha & -\beta \\ \alpha\gamma & 0 \end{array} \right). \]
The diagonal entries of the Smith Normal Form of $A$ are the greatest common divisor $(\alpha,-\beta,\alpha\gamma,0)=1$ and the quotient $|\hspace*{-1pt}\det(A)|/(\alpha,-\beta,\alpha\gamma,0)=|\alpha\beta\gamma|$, whence $G_{\Lambda}(R)^{\mathrm{ab}}\cong\mathbb{Z}_{|\alpha\beta\gamma|}=\mathbb{Z}_{|\alpha\beta(\alpha^{n-2}-\beta^{n-2})|}$.

Finally, by adjoining the relator $a^{\delta_a}$ we see that the group $G_\Lambda(R)=\pres{a,b}{R(a,b),a^{\alpha\gamma}}$ maps onto $\pres{a,b}{a^{\delta_a},b^{\beta}}\cong\Z_{\delta_a}\ast\Z_{|\beta|}$ and by adjoining the relator $b^{\delta_b}$ that it maps onto $\pres{a,b}{a^{\alpha},b^{\delta_b}}\cong\Z_{|\alpha|}\ast\Z_{\delta_b}$. If $G_{\Lambda}(R)$ is finite, then these images must be finite, too, from where the last conclusion follows.
\end{proof}

\begin{lemma}\label{lem:M1Lambda1110or1101}
Let $R(a,b)$ be as in Theorem~\ref{mainthm:balanced}. Further suppose that $(\alpha,\beta)=1$ and $a^\alpha=b^\beta$ in~$K$. Then the following hold:
\begin{itemize}
\item[\brkt{a}] If $\Lambda=\Lambda(n;\rarw{m},\larw{1})$ \brkt{$n\geq 2$, $m\geq 1$}, then \[ G_{\Lambda}(R)\cong K/\ngpres{b^{\beta^{m}(\alpha^n-\beta^n)}}^K\quad\text{and}\quad G_{\Lambda}(R)^{\mathrm{ab}}\cong\Z_{|\alpha\beta^m(\alpha^n-\beta^n)|}. \]
\item[\brkt{b}] If $\Lambda=\Lambda(n;\larw{m},\rarw{1})$ \brkt{$n\geq 2$, $m\geq 1$}, then \[ G_{\Lambda}(R)\cong K/\ngpres{a^{\alpha^{m}(\alpha^n-\beta^n)}}^K\quad\text{and}\quad G_{\Lambda}(R)^{\mathrm{ab}}\cong\Z_{|\alpha^m\beta(\alpha^n-\beta^n)|}. \]
\end{itemize}
If, in addition, $|\alpha|\neq 1$, $|\beta|\neq 1$, and $G_\Lambda(R)$ is finite, then $\delta_a=\delta_b=1$.
\end{lemma}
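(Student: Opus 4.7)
My plan is to first pin down the precise structure of $\Lambda(n;\rarw{m},\larw{1})$ using the counts $\sigma_1=1$, $\tau_1=0$ already derived in the proof of Lemma~\ref{lem:M2allowedbalancedgraphs}: the unique source is a leaf $z$ while the unique sink is the end vertex $y_m$ of the out-branch, which must therefore have indegree $2$. This forces the single in-leaf arc to go from $z$ to $y_m$ itself (not to any cycle vertex), so the presentation reads
\[ G_{\Lambda}(R) = \hugepres{x_1,\ldots,x_n,\, y_1,\ldots,y_m,\, z}{\begin{array}{l} R(x_1,x_2),\ldots,R(x_{n-1},x_n),R(x_n,x_1), \\ R(x_n,y_1),R(y_1,y_2),\ldots,R(y_{m-1},y_m), \\ R(z,y_m) \end{array}}. \]
The decisive feature is that $R(z,y_m)$ involves only the generator $y_m$, which will never be eliminated.

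For Part~(a) I would then run verbatim the cycle argument from the proof of Lemma~\ref{lem:M1Lambda0000} followed by the out-branch argument from the proof of Lemma~\ref{lem:M1Lambda1010or0101}: iterated applications of Lemma~\ref{lem:trackingargument}(a) eliminate $x_1,\ldots,x_n,y_1,\ldots,y_{m-1}$ and produce an adjoined power relator that evolves from $x_1^\gamma$ to $y_m^{\beta^m\gamma}$, where $\gamma=\alpha^n-\beta^n$. Since $R(z,y_m)$ contains none of the eliminated generators, it is untouched throughout, leaving the presentation $\pres{y_m,z}{y_m^{\beta^m\gamma},\,R(z,y_m)}$; renaming $a=z$ and $b=y_m$ identifies this with $K/\ngpres{b^{\beta^m\gamma}}^K$. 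The abelianization then follows from the $2\times 2$ relation matrix $\left(\begin{smallmatrix}\alpha & -\beta \\ 0 & \beta^m\gamma\end{smallmatrix}\right)$, whose entry-gcd is $(\alpha,\beta)=1$ and determinant is $\alpha\beta^m\gamma$, giving Smith Normal Form diagonal $(1,\,|\alpha\beta^m\gamma|)$.

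Part~(b) is deduced from Part~(a) via the reflection principle of Remark~\ref{rem:reflection}: reversing every arc of $\Lambda(n;\larw{m},\rarw{1})$ produces $\Lambda(n;\rarw{m},\larw{1})$, while the reflected word $R\hspace{1pt}'=R(b^{-1},a^{-1})$ has exponent sums $\beta$ and $-\alpha$ in $a,b$. Applying Part~(a) to $\Lambda'$ and $R\hspace{1pt}'$ and pulling the resulting isomorphism back through the canonical identification $K\cong K'$ (which sends $a\mapsto b^{-1}$, $b\mapsto a^{-1}$) yields $G_\Lambda(R)\cong K/\ngpres{a^{\alpha^m\gamma}}^K$ and abelianization $\Z_{|\alpha^m\beta\gamma|}$.

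For the final assertion about $\delta_a$ and $\delta_b$, I would mimic the closing argument of Lemma~\ref{lem:M1Lambda1100}: in $\pres{a,b}{R(a,b),\,b^{\beta^m\gamma}}$, adjoining $a^{\delta_a}$ forces every $a^{\alpha_i}$ appearing in $R(a,b)$ to be trivial (because $\delta_a\mid\alpha_i$), which collapses $R(a,b)$ to $b^{-\beta}$, so $G_{\Lambda}(R)$ maps onto $\Z_{\delta_a}\ast\Z_{|\beta|}$; symmetrically, adjoining $b^{\delta_b}$ yields a map onto $\Z_{|\alpha|}\ast\Z_{\delta_b}$. Under the hypotheses $|\alpha|,|\beta|\geq 2$ and $G_\Lambda(R)$ finite, both free products must be finite, forcing $\delta_a=\delta_b=1$. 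The only genuinely new input in the whole proof is the graph-theoretic observation placing the in-leaf at $y_m$; once that is fixed, everything else is a transparent reuse of the tracking machinery already assembled in Section~\ref{subsec:proofcase1}, and the main potential pitfall is simply being careful when translating between $\gamma_{R'}=-\gamma$ and $\gamma$ under the reflection.
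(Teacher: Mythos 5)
Your proposal is correct and follows essentially the same route as the paper: reduce to $\pres{a,b}{R(a,b),b^{\beta^m\gamma}}$ by the tracking lemma as in Lemmas~\ref{lem:M1Lambda0000} and~\ref{lem:M1Lambda1010or0101}, read off the abelianization from the Smith Normal Form of the same $2\times 2$ relation matrix, obtain Part~(b) by the reflection principle, and get the $\delta_a=\delta_b=1$ claim by killing $a^{\delta_a}$ resp.\ $b^{\delta_b}$ to map onto $\Z_{\delta_a}\ast\Z_{|\beta|}$ resp.\ $\Z_{|\alpha|}\ast\Z_{\delta_b}$. The only addition is your explicit justification that the in-leaf attaches at $y_m$, which the paper takes as part of the definition of $\Lambda(n;\rarw{m},\larw{1})$ in Figure~\ref{fig:digraphs}.
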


\begin{proof}
The proof begins essentially the same way as the one of Lemma~\ref{lem:M1Lambda1010or0101}. Again, we prove Part~(a) only. Part~(b) can either be proved similarly or it can be deduced from Part\,(a) using the reflection principle addressed in Remark~\ref{rem:reflection}. The group $G_{\Lambda}(R)$ is defined by the presentation
\[ \hugepres{
\begin{array}{l}
x_1,\ldots,x_n,\\
y_1,\ldots,y_m,z
\end{array}
}{
\begin{array}{l}
R(x_1,x_2),R(x_2,x_3),\ldots,R(x_n,x_1), \\
R(x_n,y_1),R(y_1,y_2),\ldots,R(y_{m-1},y_m),R(z,y_m)
\end{array}
}. \]
We set $\gamma=\alpha^{n}-\beta^{n}$ and apply precisely the same transformations as in the proof of Lemma~\ref{lem:M1Lambda1010or0101}. What remains is $G_{\Lambda}(R)=\pres{y_m,z}{R(z,y_m),y_m^{\beta^{m}\gamma}}$. Next, we set $a=z$ and $b=y_m$ to obtain
\[ G_{\Lambda}(R)=\pres{a,b}{R(a,b),b^{\beta^{m}\gamma}}. \]
The relation matrix of the corresponding presentation of $G_{\Lambda}(R)^{\text{ab}}$ is
\[ A=\left( \begin{array}{cc} \alpha & -\beta \\ 0 & \beta^{m}\gamma \end{array} \right). \]
The diagonal entries of the Smith Normal Form of $A$ are the greatest common divisor $(\alpha,-\beta,0,\beta^m\gamma)=1$ and the quotient $|\hspace*{-1pt}\det(A)|/(\alpha,-\beta,0,\beta^m\gamma)=|\alpha\beta^{m}\gamma|$, whence $G_{\Lambda}(R)^{\mathrm{ab}}\cong\mathbb{Z}_{|\alpha\beta^m\gamma|}=\mathbb{Z}_{|\alpha\beta^m(\alpha^{n}-\beta^{n})|}$.

Finally, by adjoining the relator $a^{\delta_a}$ we see that the group $G_\Lambda(R)=\pres{a,b}{R(a,b), b^{\beta^m\gamma}}$ maps onto $\pres{a,b}{a^{\delta_a},b^{\beta}}\cong\Z_{\delta_a}\ast\Z_{|\beta|}$ and by adjoining the relator $b^{\delta_b}$ that it maps onto $\pres{a,b}{a^{\alpha},b^{\delta_b}}\cong\Z_{|\alpha|}\ast\Z_{\delta_b}$. If $G_{\Lambda}(R)$ is finite, then these images must be finite, too, from where the last conclusion follows.
\end{proof}

\subsection{Cases where \texorpdfstring{$|\alpha|=1$}{|alpha|=1} and \texorpdfstring{$|\beta|\geq 1$}{|beta|>=1}}
\label{subsec:proofcase23}

Let us now suppose that $|\alpha|=1$ and that $a^{\alpha}=b^{\beta}$ in $K$. Because $|\alpha|=1$, the equation $a^{\alpha}=b^{\beta}$ is equivalent to $a=b^{\alpha\beta}$ in $K$ and so the relations $R(a,b)=1$ and $a=b^{\alpha\beta}$ can be derived from each other. Thus, whenever we encounter some relator $R(a,b)$, we may replace it by $a=b^{\alpha\beta}$. Now, consider a digraph $\Lambda$ and the presentation $P_{\Lambda}(R)$. If $i\in V(\Lambda)$ is a source leaf and $(i,j)\in A(\Lambda)$ is the only arc incident with it, then the relation $x_{i}=x_{j}^{\alpha\beta}$ is the only one involving $x_{i}$. We can therefore remove the generator $x_{i}$ together with that relation, leaving the other relations unchanged. In other words, if we \emph{prune} the source leaf $i$, \ie consider the digraph $\Lambda'$ obtained from $\Lambda$ by removing $i\in V(\Lambda)$ and $(i,j)\in A(\Lambda)$, then $G_{\Lambda'}(R)\cong G_{\Lambda}(R)$. Analogously, if $|\beta|=1$, then we can to prune any sink leaf without changing the isomorphism type of the resulting group. We first consider the case where $|\alpha|=1$ and $|\beta|\geq 2$.

\begin{lemma}\label{lem:oneofalphabetaisone}
Let $\Lambda$ and $R(a,b)$ be as in Theorem~\ref{mainthm:balanced}. Further suppose that $\Lambda$ is weakly connected and that $|\alpha|=1$, $|\beta|\geq 2$, and $a^{\alpha}=b^{\beta}$ in $K$. If $G_{\Lambda}(R)$ is finite, then after recursively pruning all source leaves $\Lambda$ becomes one of the following digraphs:
\begin{center}
\begin{tabular}{l@{\hspace{10mm}}l}
\brkt{a} $\Lambda_{s}=\Lambda(n)$, & \brkt{b} $\Lambda_{s}=\Lambda(n;\rarw{m})$ \brkt{$m\geq 1$}, \\
\brkt{c} $\Lambda_{s}=\Lambda(n,d)$ \brkt{$\frac{n}{2}\geq d\geq 1$}, & \brkt{d} $\Lambda_{s}=\Lambda(n,d;\rarw{m})$ \brkt{$\frac{n}{2}\geq d\geq 1$, $m\geq 1$}, \\
\brkt{e} $\Lambda_{s}=\Lambda(n;\larw{m},\rarw{\ell})$ \brkt{$m\geq 1$, $\ell\geq 1$}. &
\end{tabular}
\end{center}
\end{lemma}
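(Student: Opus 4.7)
Since $|\alpha|=1$ and $a^\alpha=b^\beta$ in~$K$, the discussion immediately preceding the lemma shows that pruning a source leaf preserves the isomorphism type of the digraph group, so $G_{\Lambda_s}(R)\cong G_{\Lambda}(R)$ is finite. Each pruning step removes one vertex and one arc without disconnecting~$\Lambda$ or destroying its unique cycle (leaves do not lie on cycles), so $\Lambda_s$ remains weakly connected, still has equal numbers of vertices and arcs, and still has girth~$n$. By construction $\Lambda_s$ has no source leaves, hence every leaf of $\Lambda_s$ is a sink. Combined with $|V(\Lambda_s)|=|A(\Lambda_s)|$ and weak connectedness, this forces the underlying undirected graph of~$\Lambda_s$ to consist of a single cycle $C$ of length~$n$ with (possibly trivial) trees attached at vertices of~$C$.

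The crux is a killing argument bounding the sinks of~$\Lambda_s$. If $u,w$ were distinct sinks of~$\Lambda_s$, then no arc could join them and they would be non-adjacent; killing the remaining generators (adjoining $x_v=1$ for every $v\neq u,w$) would then give a quotient of $G_{\Lambda_s}(R)$ in which no $R(x_u,1)=x_u^\alpha$-type relator appears (sinks have no outgoing arcs) while both $R(1,x_u)=x_u^\beta$ and $R(1,x_w)=x_w^\beta$ do (sinks have positive indegree). This quotient is $\Z_{|\beta|}\ast\Z_{|\beta|}$, infinite because $|\beta|\geq 2$, contradicting finiteness of $G_{\Lambda_s}(R)$. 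Hence $\Lambda_s$ has at most one sink.

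It remains to classify the digraphs satisfying our hypotheses together with the ``at most one sink'' constraint. A non-trivial tree attached at $v\in V(C)$ contributes at least one leaf of~$\Lambda_s$ distinct from~$v$, necessarily a sink, so there is at most one non-trivial tree; the same counting argument inside that tree forces it to be a simple path $v=w_0,w_1,\ldots,w_m$ whose unique non-attachment leaf $w_m$ is the sink. Writing $e_i$ for the arc between $w_{i-1}$ and~$w_i$, the prohibitions ``no internal sink'' (no pattern $w_{i-1}\to w_i\leftarrow w_{i+1}$) and ``no source leaf at~$w_m$'' force $e_1,\ldots,e_m$ to consist of $k$ arcs pointing toward~$v$ followed by $m-k$ arcs pointing toward~$w_m$, with $0\leq k<m$; when $k=0$ we obtain the out-path $\Lambda(n;\rarw{m})$, and when $k\geq 1$ the V-shape $\Lambda(n;\larw{k},\rarw{m-k})$ with pivot source at~$w_k$. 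Independently, sources and sinks of~$C$ balance in pairs, and any unmasked cycle-sink would be a sink of~$\Lambda_s$, so~$C$ is either cyclically directed or carries exactly one source and one sink; in the latter case any attached tree must be rooted at the cycle-sink vertex (otherwise that vertex is a second sink of~$\Lambda_s$) and must be an out-path (otherwise the V-shape leaves the attachment vertex with tree-outdegree zero, turning it into a sink). Combining these possibilities with the no-tree case yields precisely the five listed configurations~(a)--(e). The main obstacle is this final combinatorial case split, which requires tracking the orientations on~$C$ and along the attached path simultaneously to exclude spurious sinks and source leaves.
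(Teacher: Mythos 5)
Your proof is correct and follows essentially the same route as the paper: prune source leaves (harmless since $|\alpha|=1$), use the killing argument to show that two sinks would force a quotient $\Z_{|\beta|}\ast\Z_{|\beta|}$, and then classify the unicyclic digraphs with no source leaves and at most one sink. Your final combinatorial case split (tree shape first, then cycle orientation) is organized a little differently from the paper's source/sink/leaf counting on the tadpole, but it is the same argument and arrives at the same five configurations.
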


\begin{proof}
Suppose that $G_{\Lambda}(R)$ is finite. After recursively pruning all source leaves we obtain a digraph $\Lambda_{s}$ that is still non-empty and finite, has an equal number of vertices and arcs, and is weakly connected. Therefore, the underlying undirected graph of $\Lambda_{s}$ has precisely one cycle. Moreover, as explained above, since $|\alpha|=1$, pruning a source leaf does not change the isomorphism type of the resulting group, whence $G_{\Lambda_{s}}(R)$ is finite, too. Suppose that there are distinct vertices $u,w\in V(\Lambda_{s})$ that are both sinks. Killing all generators $x_v$ ($v \in V(\Lambda_{s})\smallsetminus \{u,w\}$) gives that $G_{\Lambda_{s}}(R)$ maps onto $\Z_{|\beta|}\ast\Z_{|\beta|}$, which is infinite. Thus, we can assume that $\Lambda_{s}$ has at most one sink. As in the proof of Lemma~\ref{lem:M2allowedbalancedgraphs}, let $\sigma$ and $\tau$ be the numbers of sources and sinks, respectively. By construction of $\Lambda_{s}$, there are no source leaves. Let $\tau_1$ be the number of sink leaves (which is the number of all leaves). Then $0\leq \tau_1\leq \tau\leq 1$. If $\tau=0$, then $\tau_{1}=0$ and the underlying undirected graph of $\Lambda_{s}$ is the $n$-cycle. In this situation $\tau=0$ implies that $\sigma=\tau=0$, whence $\Lambda_{s}=\Lambda(n)$. On the other hand, if $\tau=1$, then $\tau_{1}\in\sset{0,1}$. If $\tau_{1}=0$, the same argument shows that $\sigma=\tau=1$ and $\Lambda_{s}=\Lambda(n,d)$ ($\frac{n}{2}\geq d\geq 1$). If $\tau_{1}=1$, the underlying undirected graph of $\Lambda_{s}$ is a \emph{tadpole}, \ie a graph that consists of a cycle $C$ and a path $P$ joined by a bridge $B$, see Figure~\ref{fig:tadpole}. Because the leaf is the only sink of $\Lambda_{s}$, there can be at most one source on $P$ and at most one source on $C$.

\begin{enumerate}
\item If there is no source on $P$, the bridge $B$ must point away from the cycle $C$ and we distinguish between two cases. If there is no source on $C$, then $\Lambda_{s}=\Lambda(n;\rarw{m})$ ($m\geq 1$). If there is a source on~$C$, then $\Lambda_{s}=\Lambda(n,d;\rarw{m})$ ($\frac{n}{2}\geq d\geq 1$, $m\geq 1$).
\item If there is one source on $P$, then the bridge $B$ must point towards the cycle $C$ and the fact that there cannot be a sink on $C$ implies that $\Lambda_{s}=\Lambda(n;\larw{m},\rarw{\ell})$ ($m\geq 1$, $\ell\geq 1$).\hfill\qedhere
\end{enumerate}
\end{proof}

\begin{figure}
\begin{center}
\includegraphics{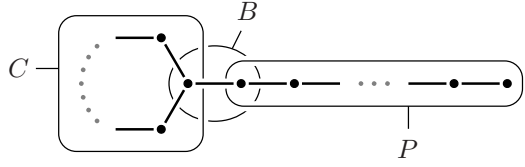}
\begin{picture}(0,0)
\put(-200,31.5){\small $C$}
\put(-54.5,1){\small $P$}
\put(-114.5,53){\small $B$}
\end{picture}
\caption{A tadpole consists of a cycle $C$ and a path $P$ joined by a bridge $B$.}
\label{fig:tadpole}
\end{center}
\end{figure}

The groups $G_{\Lambda_{s}}(R)$ arising in Cases~(a) and (b) of Lemma~\ref{lem:oneofalphabetaisone} are identified in Lemmas~\ref{lem:M1Lambda0000} and~\ref{lem:M1Lambda1010or0101}\,(a), respectively. Lemmas~\ref{lem:M1LambdaS1010}, \ref{lem:M1LambdaS1010tail}, and \ref{lem:M1LambdaS1010twotails} will address Cases~(c), (d), and (e), respectively.

\begin{lemma}\label{lem:M1LambdaS1010}
Let $R(a,b)$ be as in Theorem~\ref{mainthm:balanced}. Further suppose that $|\alpha|=1$, $|\beta|\geq 2$, and $a^\alpha=b^\beta$ in $K$. If $\Lambda_{s}=\Lambda(n,d)$ \brkt{$n\geq 3$, $\frac{n}{2}\geq d\geq 1$}, then $G_{\Lambda_{s}}(R)\cong\Z_{|(\alpha\beta)^{n-d}-(\alpha\beta)^{d}|}$. In particular, $G_{\Lambda_{s}}(R)$ is infinite if and only if $d=\frac{n}{2}$.
\end{lemma}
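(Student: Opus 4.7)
The plan is to exploit the paper's observation (made just before this lemma) that when $|\alpha|=1$ and $a^\alpha=b^\beta$ in $K$, the relation $R(x_i,x_j)=1$ is equivalent to $x_i=x_j^{\alpha\beta}$, so in the presentation $P_{\Lambda_s}(R)$ every relator may be replaced by such a substitution relation without altering the group. After this replacement the presentation becomes amenable to straightforward Tietze elimination along the two directed paths of the cycle.

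Concretely, I would label the source of $\Lambda(n,d)$ by $s$ and the sink by $t$, and let the two directed paths from $s$ to $t$ be
\[
s = u_0 \to u_1 \to \cdots \to u_d = t \quad\text{and}\quad s = w_0 \to w_1 \to \cdots \to w_{n-d} = t,
\]
where $u_0 = w_0 = s$, $u_d = w_{n-d} = t$, and the intermediate vertices are all distinct. Replacing each relator $R(\cdot,\cdot)$ by the corresponding substitution relation yields
\[
G_{\Lambda_s}(R) = \pres{u_0,\ldots,u_d,w_1,\ldots,w_{n-d-1}}{u_{i-1}=u_i^{\alpha\beta}\ (1\le i\le d),\ w_{j-1}=w_j^{\alpha\beta}\ (1\le j\le n-d)},
\]
with the conventions $w_0 = u_0$ and $w_{n-d} = u_d$.

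Next I would iteratively eliminate the interior vertices. Along the first path, repeated substitution gives $u_0 = u_d^{(\alpha\beta)^d}$, and along the second path $u_0 = u_d^{(\alpha\beta)^{n-d}}$. Eliminating $u_0$ (and all the intermediate $u_i$, $w_j$) leaves the single-generator, single-relator presentation
\[
G_{\Lambda_s}(R) = \pres{u_d}{u_d^{(\alpha\beta)^{n-d}-(\alpha\beta)^d}} \cong \Z_{|(\alpha\beta)^{n-d}-(\alpha\beta)^d|},
\]
which is the claimed isomorphism.

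For the final ``in particular'' clause, note that $(\alpha\beta)^{n-d}-(\alpha\beta)^d = (\alpha\beta)^d\bigl((\alpha\beta)^{n-2d}-1\bigr)$. Since $|\alpha|=1$ and $|\beta|\ge 2$ we have $|\alpha\beta|\ge 2$, so $(\alpha\beta)^{n-2d}=1$ holds precisely when $n-2d=0$, i.e.\ $d=n/2$. Hence the group is infinite iff $d=n/2$. There is no real obstacle in this argument: the only subtle point is the equivalence of the relators $R(x_i,x_j)$ and $x_ix_j^{-\alpha\beta}$, which is guaranteed by the hypotheses $|\alpha|=1$ and $a^\alpha=b^\beta$ in $K$ together with the Freiheitssatz, and has already been established in the discussion preceding the lemma.
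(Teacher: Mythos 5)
Your proposal is correct and follows essentially the same route as the paper: replace each relator $R(x_i,x_j)$ by $x_i=x_j^{\alpha\beta}$ (justified by $|\alpha|=1$ and $a^\alpha=b^\beta$ in $K$), eliminate the interior vertices along the two directed paths from source to sink to get $x_{\text{source}}=x_{\text{sink}}^{(\alpha\beta)^{d}}=x_{\text{sink}}^{(\alpha\beta)^{n-d}}$, and conclude $G_{\Lambda_s}(R)\cong\Z_{|(\alpha\beta)^{n-d}-(\alpha\beta)^{d}|}$. Your explicit verification of the ``in particular'' clause via the factorization $(\alpha\beta)^{d}\bigl((\alpha\beta)^{n-2d}-1\bigr)$ is a small addition the paper leaves implicit, and it is correct.
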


\begin{proof}
Let $V(\Lambda) = \{1,\ldots,n\}$. Without loss of generality let $n$ be the source and $d$ be the sink. As mentioned at the beginning of Section~\ref{subsec:proofcase23}, we may replace any occurrence of $R(a,b)$ by $a=b^{\alpha\beta}$. So,
\begin{alignat*}{1}
G_{\Lambda_{s}}(R)
&=\hugepres{x_{1},\ldots,x_{n}}{
\begin{array}{l}
x_{n}=x_{1}^{\alpha\beta},x_{1}=x_{2}^{\alpha\beta},\ldots,x_{d-1}=x_{d}^{\alpha\beta},\\
x_{n}=x_{n-1}^{\alpha\beta},x_{n-1}=x_{n-2}^{\alpha\beta},\ldots,x_{d+1}=x_{d}^{\alpha\beta}
\end{array}}\\
&=\pres{x_{d},x_{n}}{x_{n}=x_{d}^{(\alpha\beta)^{d}},x_{n}=x_{d}^{(\alpha\beta)^{n-d}}}\\
&=\pres{x_{d}}{x_{d}^{(\alpha\beta)^{n-d}-(\alpha\beta)^{d}}}.
\end{alignat*}
Therefore, $G_{\Lambda_{s}}(R)\cong\Z_{|(\alpha\beta)^{n-d}-(\alpha\beta)^{d}|}$.
\end{proof}

\begin{lemma}\label{lem:M1LambdaS1010tail}
Let $R(a,b)$ be as in Theorem~\ref{mainthm:balanced}. Further suppose that $|\alpha|=1$, $|\beta|\geq 2$, and $a^\alpha=b^\beta$ in~$K$. If $\Lambda_{s}=\Lambda(n,d;\rarw{m})$ \brkt{$n\geq 3$, $\frac{n}{2}\geq d\geq 1$, $m\geq 1$}, then $G_{\Lambda_{s}}(R)\cong\Z_{|\beta^{m}((\alpha\beta)^{n-d}-(\alpha\beta)^{d})|}$. In particular, $G_{\Lambda_{s}}(R)$ is infinite if and only if $d=\frac{n}{2}$.
\end{lemma}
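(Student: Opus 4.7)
My plan is to mimic the argument of Lemma~\ref{lem:M1LambdaS1010}, extending it to handle the additional tail of $m$ arcs. The key observation is that since $|\alpha|=1$ and $a^\alpha=b^\beta$ in $K$, every occurrence of a relator $R(x_i,x_j)$ can be replaced by the relation $x_i=x_j^{\alpha\beta}$, which makes the presentation very tractable.

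First I would fix coordinates: let $V(\Lambda_s)=\{1,\ldots,n\}\cup\{y_1,\ldots,y_m\}$, with the cycle of length $n$ having source $n$ and sink $d$ (giving directed paths $n\to 1\to 2\to\cdots\to d$ of length $d$ and $n\to n-1\to\cdots\to d+1\to d$ of length $n-d$), and with the tail attached at the sink vertex and pointing outwards, i.e.\ arcs $d\to y_1, y_1\to y_2, \ldots, y_{m-1}\to y_m$. Replacing each relator $R(a,b)$ by $a=b^{\alpha\beta}$, I would collapse the two directed paths inside the cycle exactly as in the proof of Lemma~\ref{lem:M1LambdaS1010} to eliminate $x_1,\ldots,x_{d-1},x_{d+1},\ldots,x_n$ and obtain a single cycle relation $x_d^{(\alpha\beta)^{n-d}-(\alpha\beta)^d}=1$. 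Analogously, collapsing the tail via the relations $x_d=y_1^{\alpha\beta},\;y_1=y_2^{\alpha\beta},\;\ldots,\;y_{m-1}=y_m^{\alpha\beta}$ eliminates $x_d,y_1,\ldots,y_{m-1}$ in favour of $y_m$, yielding $x_d=y_m^{(\alpha\beta)^m}$.

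Combining the two, $G_{\Lambda_s}(R)$ is presented on the single generator $y_m$ by the relation
\[ y_m^{(\alpha\beta)^m\left((\alpha\beta)^{n-d}-(\alpha\beta)^d\right)}=1. \]
Since $|\alpha|=1$ we have $(\alpha\beta)^m=\pm\beta^m$, so the exponent has absolute value $|\beta^m((\alpha\beta)^{n-d}-(\alpha\beta)^d)|$, giving the claimed isomorphism $G_{\Lambda_s}(R)\cong\Z_{|\beta^m((\alpha\beta)^{n-d}-(\alpha\beta)^d)|}$. For the ``in particular'' clause, note that this group is infinite precisely when the exponent vanishes, i.e.\ $(\alpha\beta)^{n-d}=(\alpha\beta)^d$, equivalently $(\alpha\beta)^{n-2d}=1$; because $|\alpha\beta|\geq 2$ (as $|\alpha|=1$ and $|\beta|\geq 2$), this forces $n-2d=0$, i.e.\ $d=\tfrac{n}{2}$.

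I do not anticipate any real obstacle: the entire argument is a direct elaboration of Lemma~\ref{lem:M1LambdaS1010}, and the only point requiring any care is making sure that the substitutions on the cycle and on the tail can be carried out independently, which is immediate since the two collapsing procedures touch disjoint sets of relators and share only the generator $x_d$, which is expressible both as a power of $y_m$ (from the tail) and as a root of a central monomial relation (from the cycle).
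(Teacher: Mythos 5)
Your proposal is correct and follows essentially the same route as the paper's proof: replace each relator by $x_i=x_j^{\alpha\beta}$ (valid since $|\alpha|=1$ and $a^\alpha=b^\beta$ in $K$), collapse the two directed paths of the cycle onto $x_d$ to get $x_d^{(\alpha\beta)^{n-d}-(\alpha\beta)^d}=1$, collapse the tail to get $x_d=y_m^{(\alpha\beta)^m}$, and combine. The paper performs exactly these eliminations and leaves the ``in particular'' clause implicit, which you spell out correctly.
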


\begin{proof}
As in the proof of Lemma~\ref{lem:M1LambdaS1010}, we have
\begin{alignat*}{1}
G_{\Lambda_{s}}(R)
&=\hugepres{
\begin{array}{l}
x_1,\ldots,x_n,\\
y_1,\ldots, y_m
\end{array}
}{
\begin{array}{l}
x_{n}=x_{1}^{\alpha\beta},x_{1}=x_{2}^{\alpha\beta},\ldots,x_{d-1}=x_{d}^{\alpha\beta},\\
x_{n}=x_{n-1}^{\alpha\beta},x_{n-1}=x_{n-2}^{\alpha\beta},\ldots,x_{d+1}=x_{d}^{\alpha\beta},\\
x_{d}=y_{1}^{\alpha\beta},y_{1}=y_{2}^{\alpha\beta},\ldots,y_{m-1}=y_{m}^{\alpha\beta}
\end{array}}\\
&=\pres{x_{d},y_{m}}{x_{d}^{(\alpha\beta)^{n-d}-(\alpha\beta)^{d}},x_{d}=y_{m}^{(\alpha\beta)^{m}}}\\
&=\pres{y_{m}}{y_{m}^{(\alpha\beta)^{m}((\alpha\beta)^{n-d}-(\alpha\beta)^{d})}}.
\end{alignat*}
Therefore, $G_{\Lambda_{s}}(R)\cong\Z_{|\beta^{m}((\alpha\beta)^{n-d}-(\alpha\beta)^{d})|}$.
\end{proof}

\begin{lemma}\label{lem:M1LambdaS1010twotails}
Let $R(a,b)$ be as in Theorem~\ref{mainthm:balanced}. Further suppose that $|\alpha|=1$, $|\beta|\geq 2$, and $a^\alpha=b^\beta$ in~$K$. If $\Lambda_{s}=\Lambda(n;\larw{m},\rarw{\ell})$ \brkt{$n\geq 2$, $m\geq 1$, $\ell\geq 1$}, then $G_{\Lambda_{s}}(R)\cong\Z_{|\beta^{\ell}(\alpha^{n}-\beta^{n})|}$.
\end{lemma}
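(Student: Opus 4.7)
I will follow the template of Lemmas~\ref{lem:M1LambdaS1010} and \ref{lem:M1LambdaS1010tail}. Because $|\alpha|=1$ and $a^{\alpha}=b^{\beta}$ in $K$, each relator $R(x_{i},x_{j})$ can be replaced by $x_{i}=x_{j}^{\alpha\beta}$. Label the vertices of $\Lambda_{s}$ so that $x_{1},\ldots,x_{n}$ traverse the directed cycle with arcs $(x_{i},x_{i+1})$ \brkt{indices modulo $n$}, the $m$ arcs of the inward tail run $s\to u_{1}\to\cdots\to u_{m-1}\to x_{1}$ starting at the source~$s$, and the $\ell$ arcs of the outward tail run $s\to w_{1}\to\cdots\to w_{\ell}$ and end at the sink leaf~$w_{\ell}$.

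Propagating the relation $x_{i}=x_{j}^{\alpha\beta}$ along the three chains produces $x_{1}^{(\alpha\beta)^{n}-1}=1$ from the cycle, $s=x_{1}^{(\alpha\beta)^{m}}$ from the inward tail, and $s=w_{\ell}^{(\alpha\beta)^{\ell}}$ from the outward tail. Eliminating the generators $x_{2},\ldots,x_{n}$, $u_{1},\ldots,u_{m-1}$, $s$, and $w_{1},\ldots,w_{\ell-1}$ by Tietze transformations leaves
\[
G_{\Lambda_{s}}(R)=\pres{x_{1},w_{\ell}}{x_{1}^{(\alpha\beta)^{n}-1},\; x_{1}^{(\alpha\beta)^{m}}w_{\ell}^{-(\alpha\beta)^{\ell}}}.
\]

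Since $|\alpha|=1$ forces $(\alpha,\beta)=1$, we have $\gcd((\alpha\beta)^{m},(\alpha\beta)^{n}-1)=1$. Choose $p,q\in\Z$ with $p(\alpha\beta)^{m}+q((\alpha\beta)^{n}-1)=1$. The two relations combined give
\[
x_{1}=\bigl(x_{1}^{(\alpha\beta)^{m}}\bigr)^{p}\bigl(x_{1}^{(\alpha\beta)^{n}-1}\bigr)^{q}=w_{\ell}^{p(\alpha\beta)^{\ell}},
\]
so $G_{\Lambda_{s}}(R)$ is cyclic, generated by $w_{\ell}$. Because a cyclic group equals its own abelianization, its order is $|\hspace*{-1pt}\det(A)|$, where $A=\left(\begin{smallmatrix}(\alpha\beta)^{n}-1 & 0 \\ (\alpha\beta)^{m} & -(\alpha\beta)^{\ell}\end{smallmatrix}\right)$ is the relation matrix; this yields $|\hspace*{-1pt}\det(A)|=|\beta|^{\ell}|(\alpha\beta)^{n}-1|=|\beta^{\ell}(\alpha^{n}-\beta^{n})|$, where the last equality uses the factorization $(\alpha\beta)^{n}-1=\alpha^{n}(\beta^{n}-\alpha^{n})$ \brkt{valid since $\alpha^{2}=1$} together with $|\alpha^{n}|=1$. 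Hence $G_{\Lambda_{s}}(R)\cong\Z_{|\beta^{\ell}(\alpha^{n}-\beta^{n})|}$.

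The only ingredient beyond the routine chain-tracking of Section~\ref{subsec:proofcase23} is the Bezout step collapsing $\gpres{x_{1},w_{\ell}}$ to $\gpres{w_{\ell}}$; this is where $(\alpha,\beta)=1$ is crucial and the only place the argument could plausibly have gone wrong.
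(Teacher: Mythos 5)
Your proof is correct and follows essentially the same route as the paper: collapse the three chains using $x_i=x_j^{\alpha\beta}$, then use a B\'ezout relation between $(\alpha\beta)^m$ and $(\alpha\beta)^n-1$ to show the group is cyclic on the sink-leaf generator. The only (harmless) difference is at the very end, where the paper continues with Tietze transformations down to $\pres{z}{z^{(\alpha\beta)^{\ell}(\alpha^{n}-\beta^{n})}}$ while you read off the order from the determinant of the relation matrix; this is valid because $|\beta|\geq 2$ guarantees $(\alpha\beta)^{n}-1\neq 0$.
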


\begin{proof}
We have
\begin{alignat*}{1}
G_{\Lambda_{s}}(R)
&=\hugepres{
\begin{array}{l}
x_1,\ldots,x_n,\\
y_1,\ldots, y_m,\\
z_1,\ldots,z_\ell
\end{array}
}{
\begin{array}{l}
x_{1}=x_{2}^{\alpha\beta},x_{2}=x_{3}^{\alpha\beta},\ldots,x_{n-1}=x_{n}^{\alpha\beta},x_{n}=x_{1}^{\alpha\beta},\\
y_{m}=y_{m-1}^{\alpha\beta},y_{m-1}=y_{m-2}^{\alpha\beta},\ldots,y_{2}=y_{1}^{\alpha\beta},y_{1}=x_{n}^{\alpha\beta},\\
y_{m}=z_{1}^{\alpha\beta},z_{1}=z_{2}^{\alpha\beta},\ldots,z_{\ell-1}=z_{\ell}^{\alpha\beta}
\end{array}}\\
&=\pres{x_{n},y_{m},z_{\ell}}{x_{n}=x_{n}^{(\alpha\beta)^{n}},y_{m}=x_{n}^{(\alpha\beta)^{m}},y_{m}=z_{\ell}^{(\alpha\beta)^{\ell}}}\\
&=\pres{x_{n},y_{m},z_{\ell}}{x_{n}^{\alpha^{n}}=x_{n}^{\beta^{n}},y_{m}=x_{n}^{(\alpha\beta)^{m}},y_{m}=z_{\ell}^{(\alpha\beta)^{\ell}}}\\
&=\pres{x_{n},y_{m},z_{\ell}}{x_{n}^{\alpha^{n}-\beta^{n}},y_{m}=x_{n}^{(\alpha\beta)^{m}},y_{m}=z_{\ell}^{(\alpha\beta)^{\ell}}}.
\end{alignat*}
We set $x=x_{n}$, $y=y_{m}$, $z=z_{\ell}$, and $\gamma=\alpha^{n}-\beta^{n}$, $s=(\alpha\beta)^{m}$. Since $(\gamma,s)=1$, there is an integer $t\in\mathbb{Z}$ such that $st\equiv 1~(\text{mod}~\gamma)$. Thus,
\begin{alignat*}{1}
G_{\Lambda_{s}}(R)
&=\pres{x,y,z}{x^{\gamma},y=x^{s},y=z^{(\alpha\beta)^{\ell}}}\\
&=\pres{x,y,z}{x^{\gamma},y=x^{s},y=z^{(\alpha\beta)^{\ell}},y^{\gamma},y^{t}=x^{st}}\\
&=\pres{x,y,z}{x^{\gamma},y=x^{s},y=z^{(\alpha\beta)^{\ell}},y^{\gamma},y^{t}=x}\\
&=\pres{y,z}{y^{\gamma t},y=y^{st},y=z^{(\alpha\beta)^{\ell}},y^{\gamma}}\\
&=\pres{y,z}{y=z^{(\alpha\beta)^{\ell}},y^{\gamma}}=\pres{z}{z^{(\alpha\beta)^{\ell}\gamma}}.
\end{alignat*}
Therefore, $G_{\Lambda_{s}}(R)\cong\Z_{|\beta^{\ell}\gamma|}=\Z_{|\beta^{\ell}(\alpha^{n}-\beta^{n})|}$.
\end{proof}

We now turn to the case where $|\alpha|=|\beta|=1$.

\begin{lemma}\label{lem:bothalphabetaareone}
Let $\Lambda$ and $R(a,b)$ be as in Theorem~\ref{mainthm:balanced}. Further suppose that $\Lambda$ is weakly connected and that $|\alpha|=|\beta|=1$ and $a^{\alpha}=b^{\beta}$ in $K$. If $G_{\Lambda}(R)$ is finite, then $\alpha=-\beta$, $n$ is odd, and $G_{\Lambda}(R)\cong\Z_{2}$.
\end{lemma}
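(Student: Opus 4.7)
The plan starts by noting that both hypotheses $|\alpha|=1$ and $|\beta|=1$ permit recursive pruning: the preamble to Section~\ref{subsec:proofcase23} establishes that source leaves can be pruned when $|\alpha|=1$, and the symmetric argument (or the reflection principle of Remark~\ref{rem:reflection}) shows that sink leaves can be pruned when $|\beta|=1$. Applying both operations repeatedly to the weakly connected digraph $\Lambda$ will yield a digraph $\Lambda''$ with $G_{\Lambda''}(R) \cong G_{\Lambda}(R)$ that is still weakly connected, still has an equal number of vertices and arcs, but has no leaves at all. A unicyclic undirected graph with no degree-$1$ vertex is itself a cycle, so the underlying undirected graph of $\Lambda''$ must be the $n$-cycle; the individual arcs of $\Lambda''$, however, may be oriented arbitrarily.

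\textbf{Simplification.} Next I would set $\varepsilon := \alpha\beta \in \{-1,+1\}$ and note that $\varepsilon^{2}=1$. The hypothesis $a^{\alpha}=b^{\beta}$ in $K$ together with $|\alpha|=1$ forces $a=b^{\varepsilon}$ in $K$, so by the discussion at the beginning of Section~\ref{subsec:proofcase23} every occurrence of $R(x_{u},x_{v})$ in the presentation may be replaced by the relation $x_{u}=x_{v}^{\varepsilon}$. The key observation is that, because $\varepsilon^{2}=1$, this relation is symmetric in $u$ and $v$, so the orientation of each individual arc of $\Lambda''$ is immaterial. Labelling the vertices $v_{1},\ldots,v_{n}$ consecutively around the cycle, each edge therefore yields $x_{v_{i+1}}=x_{v_{i}}^{\varepsilon}$; telescoping around the cycle leaves a presentation on the single generator $x:=x_{v_{1}}$ with the single relation $x=x^{\varepsilon^{n}}$, i.e.\ $G_{\Lambda}(R)\cong\gpres{x\mid x^{\varepsilon^{n}-1}}$.

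\textbf{Finish, and the only obstacle.} A one-line case analysis then concludes the argument: if $\varepsilon=1$, or $\varepsilon=-1$ with $n$ even, then $\varepsilon^{n}-1=0$ and $G_{\Lambda}(R)\cong\Z$, contradicting finiteness; hence $\varepsilon=-1$ and $n$ is odd, giving $\varepsilon^{n}-1=-2$ and $G_{\Lambda}(R)\cong\Z_{2}$. Since $\varepsilon=\alpha\beta=-1$ with $|\alpha|=|\beta|=1$ forces $\alpha=-\beta$, all three conclusions follow. The proof carries no serious obstacle; the only point requiring any thought is the realization that $\varepsilon^{2}=1$ renders the arc directions of $\Lambda''$ irrelevant, which is what permits telescoping around a cycle whose arcs are not uniformly oriented.
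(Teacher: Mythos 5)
Your proposal is correct and follows essentially the same route as the paper: prune all source and sink leaves (using $|\alpha|=|\beta|=1$) to reduce to the $n$-cycle, replace each relator by $x_u=x_v^{\alpha\beta}$, telescope, and conclude $\Z$ or $\Z_2$ according to the sign of $\alpha\beta$ and the parity of $n$. Your explicit observation that $(\alpha\beta)^2=1$ makes the arc orientations around the cycle immaterial is a point the paper's proof uses tacitly (it simply writes the relations as if the cycle were consistently directed), so spelling it out is a small improvement rather than a different approach.
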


\begin{proof}
Suppose that $G_{\Lambda}(R)$ is finite. After recursively pruning all source and all sink leaves we obtain a digraph $\Lambda_{st}$ whose underlying undirected graph is the $n$-cycle. As explained at the beginning of Section~\ref{subsec:proofcase23}, since $|\alpha|=|\beta|=1$, pruning any leaf does not change the isomorphism type of the resulting group. Thus, if $\alpha=\beta$, then
\[
G_{\Lambda}(R)=\pres{x_{1},\ldots,x_{n}}{x_{1}=x_{2},x_{2}=x_{3},\ldots,x_{n}=x_{1}}\cong\mathbb{Z}.
\]
So assume that $\alpha=-\beta$, in which case
\[
G_{\Lambda}(R)=\pres{x_{1},\ldots,x_{n}}{x_{1}=x_{2}^{-1},x_{2}=x_{3}^{-1},\ldots,x_{n}=x_{1}^{-1}}\cong\left\{\begin{array}{ll} \mathbb{Z} & \text{if $n$ is even,} \\ \mathbb{Z}_{2} & \text{if $n$ is odd.} \end{array}\right.
\]
\end{proof}

\subsection{Proof of Theorem~\ref{mainthm:balanced}}
\label{subsec:prooftheorema}

We first restrict ourselves to the case where the digraph $\Lambda$ is weakly connected. The conclusion that $G_{\Lambda}(R)$ is non-trivial will then allow us to deal with the case where $\Lambda$ is not weakly connected. So suppose that $\Lambda$ is weakly connected and that $G_{\Lambda}(R)$ is finite. By Corollary~\ref{cor:trianglefreeimpliesinfinite}, we can assume that $K$ does not have Property~$W_1$. Then, by Proposition~\ref{prop:center}, we have $\alpha\neq 0$, $\beta\neq 0$, and $a^\alpha=b^\beta$ in $K$. If $|\alpha|\geq 2$ and $|\beta|\geq 2$, then Lemma~\ref{lem:M2allowedbalancedgraphs} implies that $(\alpha,\beta)=1$, whence $\alpha^{n}-\beta^{n}\neq 0$, and that $\Lambda$ is one of the following digraphs:
\begin{center}
\begin{tabular}{l@{\hspace{10mm}}l@{\hspace{10mm}}l}
\brkt{a} $\Lambda(n)$, & \brkt{b} $\Lambda(n;\rarw{m})$ \brkt{$m\geq 1$}, & \brkt{c} $\Lambda(n;\larw{m})$ \brkt{$m\geq 1$}, \\
\brkt{d} $\Lambda(n,1)$, & \brkt{e} $\Lambda(n;\rarw{m},\larw{1})$ \brkt{$m\geq 1$}, & \brkt{f} $\Lambda(n;\larw{m},\rarw{1})$ \brkt{$m\geq 1$}.
\end{tabular}
\end{center}
The result then follows from Lemmas~\ref{lem:M1Lambda0000}, \ref{lem:M1Lambda1010or0101}, \ref{lem:M1Lambda1100}, \ref{lem:M1Lambda1110or1101}. Note that in each case $G_{\Lambda}(R)$ is non-trivial. Indeed, in Cases (a)--(c) this can be deduced directly from the lemmas. In Cases (d)--(f) observe that the abelianization $G_{\Lambda}(R)^{\mathrm{ab}}$ is non-trivial, and so is $G_{\Lambda}(R)$. If $|\alpha|=1$ and $|\beta|\geq 2$, then $(\alpha,\beta)=1$ and $\alpha^{n}-\beta^{n}\neq 0$ are obviously satisfied.\begin{samepage} Lemma~\ref{lem:oneofalphabetaisone} implies that after recursively pruning all source leaves $\Lambda$ becomes one of the following digraphs:
\begin{center}
\begin{tabular}{l@{\hspace{10mm}}l}
\brkt{a} $\Lambda_{s}=\Lambda(n)$, & \brkt{b} $\Lambda_{s}=\Lambda(n;\rarw{m})$ \brkt{$m\geq 1$}, \\
\brkt{c} $\Lambda_{s}=\Lambda(n,d)$ \brkt{$\frac{n}{2}\geq d\geq 1$}, & \brkt{d} $\Lambda_{s}=\Lambda(n,d;\rarw{m})$ \brkt{$\frac{n}{2}\geq d\geq 1$, $m\geq 1$}, \\
\brkt{e} $\Lambda_{s}=\Lambda(n;\larw{m},\rarw{\ell})$ \brkt{$m\geq 1$, $\ell\geq 1$}. &
\end{tabular}
\end{center}
\end{samepage}
The result then follows from Lemmas~\ref{lem:M1Lambda0000}, \ref{lem:M1Lambda1010or0101}\,(a), \ref{lem:M1LambdaS1010}, \ref{lem:M1LambdaS1010tail}, \ref{lem:M1LambdaS1010twotails}. Again, it turns out that in each case $G_{\Lambda}(R)$ is non-trivial. If $|\alpha|\geq 2$ and $|\beta|=1$, then we have a mere reflection of the previous situation. More precisely, due to the reflection principle addressed in Remark~\ref{rem:reflection}, there is a one-to-one correspondence between the two situations: starting off from a digraph $\Lambda$ and a word $R(a,b)$ with $|\alpha|\geq 2$ and $|\beta|=1$, we reverse the direction of each arc, interchange $a$ and $b$, and further replace every letter by its inverse so that also $\alpha$ and $\beta$ are interchanged (without any change of sign), to obtain a digraph $\Lambda'$ and a word $R\hspace{1pt}'(a,b)$ with $|\alpha|=1$ and $|\beta|\geq 2$. Since the resulting groups are isomorphic, we can translate the classification of finite groups from one situation to the other. Note that we now have to consider the digraph $\Lambda_{t}$ obtained from $\Lambda$ by recursively pruning all sink leaves.

If $|\alpha|=|\beta|=1$, then $(\alpha,\beta)=1$ is obviously satisfied. Moreover, Lemma~\ref{lem:bothalphabetaareone} implies that $\alpha=-\beta$ and $n$~is odd, whence $|\alpha^{n}-\beta^{n}|=2$ and, in particular, $\alpha^{n}-\beta^{n}\neq 0$. The lemma further implies that $G_{\Lambda}(R)\cong\Z_{2}$, which is non-trivial. This completes the proof for the case where the digraph $\Lambda$ is weakly connected and, in particular, shows that in this case $G_{\Lambda}(R)$ is non-trivial.

What remains is to consider the case where $\Lambda$ is not weakly connected, for which we claim that $G_{\Lambda}(R)$ cannot be finite. Indeed, if $\Lambda$ has weakly connected components $\Lambda_1,\ldots,\Lambda_k$ with $k\geq 2$, then $G_{\Lambda}(R)$ is isomorphic to the free product $G_{\Lambda_1}(R)\ast\ldots\ast G_{\Lambda_k}(R)$. If for some $1 \leq i\leq k$ the weakly connected component $\Lambda_i$ has more vertices than arcs, then $P_{\Lambda_i}(R)$ has more generators than relators. Hence, $G_{\Lambda_i}(R)$ is infinite, and so is $G_{\Lambda}(R)$. Because of this we can assume that each $\Lambda_i$ has at most as many vertices as arcs. If any $\Lambda_i$ has fewer vertices than arcs, then $\Lambda$ has fewer vertices than arcs, which contradicts the hypothesis of Theorem~\ref{mainthm:balanced}. Therefore, each $\Lambda_i$ has an equal number of vertices and arcs. But then, by the above, each $G_{\Lambda_i}(R)$ is non-trivial and thus $G_{\Lambda}(R)\cong G_{\Lambda_1}(R)\ast\ldots\ast G_{\Lambda_k}(R)$ is infinite.

\bibliographystyle{plain}

\begin{thebibliography}{10}

\bibitem{Allcock}
D.~{Allcock}, {Triangles of Baumslag--Solitar groups}, {Can. J. Math.} 64
  (2012) 241--253.
\newblock \href {http://dx.doi.org/10.4153/CJM-2011-062-8}
  {\path{doi:10.4153/CJM-2011-062-8}}.

\bibitem{AndrewsCurtis}
J.~J. {Andrews}, M.~L. {Curtis}, {Free groups and handlebodies}, {Proc. Am.
  Math. Soc.} 16 (1965) 192--195.
\newblock \href {http://dx.doi.org/10.2307/2033843}
  {\path{doi:10.2307/2033843}}.

\bibitem{BogleyWilliams}
W.~A. {Bogley}, G.~{Williams}, {Efficient finite groups arising in the study of
  relative asphericity}, {Math. Z.} 284 (2016) 507--535.
\newblock \href {http://dx.doi.org/10.1007/s00209-016-1664-3}
  {\path{doi:10.1007/s00209-016-1664-3}}.

\bibitem{CampbellRobertson}
C.~M. {Campbell}, E.~F. {Robertson}, {On a group presentation due to Fox},
  {Can. Math. Bull.} 19 (1976) 247--248.
\newblock \href {http://dx.doi.org/10.4153/CMB-1976-039-9}
  {\path{doi:10.4153/CMB-1976-039-9}}.

\bibitem{Collins04}
D.~J. {Collins}, {Intersections of Magnus subgroups of one-relator groups}, in:
  T.~W. {M\"uller} (Ed.), {Groups: Topological, combinatorial and arithmetic
  aspects}, Vol. 311 of {Lond. Math. Soc. Lect. Note Ser.}, Cambridge
  University Press, Cambridge, 2004, pp. 255--296.
\newblock \href {http://dx.doi.org/10.1017/cbo9780511550706.009}
  {\path{doi:10.1017/cbo9780511550706.009}}.

\bibitem{Collins08}
D.~J. {Collins}, {Intersections of conjugates of Magnus subgroups of
  one-relator groups}, in: M.~{Boileau}, M.~{Scharlemann}, R.~{Weidmann}
  (Eds.), {The Zieschang Gedenkschrift}, Vol.~14 of {Geometry \& Topology
  Monographs}, Geometry \& Topology Publications, Coventry, 2008, pp. 135--171.
\newblock \href {http://dx.doi.org/10.2140/gtm.2008.14.135}
  {\path{doi:10.2140/gtm.2008.14.135}}.

\bibitem{EdjvetHowie08}
M.~{Edjvet}, J.~{Howie}, {Intersections of Magnus subgroups and embedding
  theorems for cyclically presented groups}, {J. Pure Appl. Algebra} 212 (2008)
  47--52.
\newblock \href {http://dx.doi.org/10.1016/j.jpaa.2007.04.009}
  {\path{doi:10.1016/j.jpaa.2007.04.009}}.

\bibitem{GolodShafarevich}
E.~S. {Golod}, I.~R. {Shafarevich}, {On the class field tower}, {Izv. Akad.
  Nauk SSSR Ser. Mat.} 28 (1964) 261--272, {E}nglish translation in {Am. Math.
  Soc. Transl. Ser. 2} 48 (1965) 91--102.
\newblock \href {http://dx.doi.org/10.1090/trans2/048/05}
  {\path{doi:10.1090/trans2/048/05}}.

\bibitem{HavasNewmanOBrien}
G.~{Havas}, M.~F. {Newman}, E.~A. {O'Brien}, {Groups of deficiency zero}, in:
  G.~{Baumslag}, D.~{Epstein}, R.~{Gilman}, H.~{Short}, C.~{Sims} (Eds.),
  {Geometric and computational perspectives on infinite groups}, Vol.~25 of
  DIMACS Series in Discrete Mathematics and Theoretical Computer Science,
  American Mathematical Society, Providence, RI, 1996, pp. 53--67.
\newblock \href {http://dx.doi.org/10.1090/dimacs/025/04}
  {\path{doi:10.1090/dimacs/025/04}}.

\bibitem{Higman}
G.~{Higman}, {A finitely generated infinite simple group}, {J. Lond. Math.
  Soc.} 26 (1951) 61--64.
\newblock \href {http://dx.doi.org/10.1112/jlms/s1-26.1.61}
  {\path{doi:10.1112/jlms/s1-26.1.61}}.

\bibitem{Howie05}
J.~{Howie}, {Magnus intersections in one-relator products}, {Mich. Math. J.} 53
  (2005) 597--623.
\newblock \href {http://dx.doi.org/10.1307/mmj/1133894169}
  {\path{doi:10.1307/mmj/1133894169}}.

\bibitem{Johnson79}
D.~L. {Johnson}, {A new class of 3-generator finite groups of deficiency zero},
  {J. Lond. Math. Soc. (2)} 19 (1979) 59--61.
\newblock \href {http://dx.doi.org/10.1112/jlms/s2-19.1.59}
  {\path{doi:10.1112/jlms/s2-19.1.59}}.

\bibitem{JohnsonBook}
D.~L. {Johnson}, {Topics in the theory of group presentations}, Vol.~42 of
  {Lond. Math. Soc. Lect. Note Ser.}, Cambridge University Press, Cambridge,
  1980.
\newblock \href {http://dx.doi.org/10.1017/cbo9780511629303}
  {\path{doi:10.1017/cbo9780511629303}}.

\bibitem{JohnsonRobertson}
D.~L. {Johnson}, E.~F. {Robertson}, {Finite groups of deficiency zero}, in:
  C.~T.~C. {Wall} (Ed.), {Homological group theory}, Vol.~36 of {Lond. Math.
  Soc. Lect. Note Ser.}, {Cambridge University Press, Cambridge}, 1979, pp.
  275--290.
\newblock \href {http://dx.doi.org/10.1017/cbo9781107325449.018}
  {\path{doi:10.1017/cbo9781107325449.018}}.

\bibitem{JohnsonWamsleyWright}
D.~L. {Johnson}, J.~W. {Wamsley}, D.~{Wright}, {The Fibonacci groups}, {Proc.
  Lond. Math. Soc. (3)} 29 (1974) 577--592.
\newblock \href {http://dx.doi.org/10.1112/plms/s3-29.4.577}
  {\path{doi:10.1112/plms/s3-29.4.577}}.

\bibitem{Magnus}
W.~{Magnus}, {\"Uber diskontinuierliche Gruppen mit einer definierenden
  Relation (Der Freiheitssatz)}, J. Reine Angew. Math. 163 (1930) 141--165.
\newblock \href {http://dx.doi.org/10.1515/crll.1930.163.141}
  {\path{doi:10.1515/crll.1930.163.141}}.

\bibitem{Mennicke}
J.~{Mennicke}, {Einige endliche Gruppen mit drei Erzeugenden und drei
  Relationen}, {Arch. Math. (Basel)} 10 (1959) 409--418.
\newblock \href {http://dx.doi.org/10.1007/BF01240820}
  {\path{doi:10.1007/BF01240820}}.

\bibitem{Pride}
S.~J. {Pride}, {Groups with presentations in which each defining relator
  involves exactly two generators}, {J. Lond. Math. Soc. (2)} 36 (1987)
  245--256.
\newblock \href {http://dx.doi.org/10.1112/jlms/s2-36.2.245}
  {\path{doi:10.1112/jlms/s2-36.2.245}}.

\bibitem{Stallings}
J.~R. {Stallings}, {Non-positively curved triangles of groups}, in: E.~{Ghys},
  A.~{Haefliger}, A.~{Verjowski} (Eds.), {Group theory from a geometrical
  viewpoint}, World Scientific, Singapore, 1991, pp. 491--503.
\newblock \href {http://dx.doi.org/10.1142/1235} {\path{doi:10.1142/1235}}.

\end{thebibliography}

\bigskip\centerline{---------------------------------------}\bigskip

\noindent
\textsc{Department of Mathematics, Ludwigsburg University of Education, \\ Reuteallee 46, 71634 Ludwigsburg, Germany.}\par\nopagebreak
\textit{E-mail address:} \texttt{johannes.cuno@ph-ludwigsburg.de}

\medskip

\noindent
\textsc{Department of Mathematical Sciences, University of Essex, \\ Wivenhoe Park, Colchester, Essex, CO4 3SQ, UK.}\par\nopagebreak
\textit{E-mail address:} \texttt{gerald.williams@essex.ac.uk}

\end{document}